\theoremstyle{plain}
\newtheorem{thm}{Theorem}[section]
\newtheorem{prop}[thm]{Proposition}
\newtheorem{cor}[thm]{Corollary}
\newtheorem{lem}[thm]{Lemma}
\theoremstyle{definition}
\newtheorem{df}{Definition}[section]
\theoremstyle{remark}
\newtheorem{rmk}{Remark}[section]
\newtheorem*{ac}{Acknowledgements}
\newcommand{\zz}{\mathbb{Z}}
\newcommand{\rr}{\mathbb{R}}
\newcommand{\grsp}{\mathscr{M}}
\DeclareMathOperator{\grdis}{\mathcal{GH}}
\DeclareMathOperator{\hdis}{\mathcal{HD}}
\newcommand{\deltasecond}{H^{\times}}
\newcommand{\sqsq}[1]{\mathbf{#1}}
\newcommand{\disdis}{R}
\newcommand{\ctree}{\Upsilon}
\newcommand{\grsptwo}{\mathscr{PM}}
\newcommand{\qqcube}{\mathbf{C}}
\DeclareMathOperator{\pgrdis}{\mathcal{GH}_{*}}
\newcommand{\topemb}{\rho}
\newcommand{\treeset}{\mathscr{T}}
\DeclareMathOperator{\mea}{\deg}
\newcommand{\lbra}{\left(}
\newcommand{\rbra}{\right)}
\newcommand{\lbran}{\left\{}
\newcommand{\rbran}{\right\}}
\newcommand{\zantei}{\circ}
\newcommand{\intvl}{\mathbb{I}}
\newcommand{\sansan}[1]{\mathcal{Y}_{3}(#1)}
\newcommand{\hcube}{H}
\newcommand{\tbranch}{\mathrm{B}}
\newcommand{\ninnin}[1]{\mathcal{I}_{2}(#1)}
\newcommand{\treesettwo}{\mathscr{PT}}
\DeclareMathOperator{\diam}{diam}
\newcommand{\soeji}{\ell}
\begin{document}

\title[Trees]
{
Metric trees
in 
 the Gromov--Hausdorff space
 }

\author[Yoshito Ishiki]
{Yoshito Ishiki}
\address[Yoshito Ishiki]
{\endgraf
Photonics Control Technology Team
\endgraf
RIKEN Center for Advanced Photonics
\endgraf
2-1 Hirasawa, Wako, Saitama 351-0198, Japan}
\email{yoshito.ishiki@riken.jp}

\subjclass[2020]{Primary 53C23, Secondary 51F99}
\keywords{Metric tree, Gromov--Hausdorff distance}

\maketitle

\begin{abstract}
Using the wedge sum of metric spaces, 
for all compact metrizable spaces, 
we  construct a topological embedding of 
the compact metrizable space into
the set of all 
metric trees in 
 the Gromov--Hausdorff space with finite  prescribed values. 
As its application, we show that 
the set of all metric trees is path-connected 
and 
its all non-empty open subsets 
have infinite topological dimension. 
\end{abstract}

\section{Introduction}\label{sec:intro}
In \cite{Ishiki2021branching},
by constructing continuum many geodesics in the Gromov--Hausdorff space, parametrized by a Hilbert cube, 
the author proved that 
sets of all spaces satisfying  some of  
the doubling property, the uniform disconnectedness, 
the uniform perfectness, 
and   
 sets of 
all infinite-dimensional spaces, 
and the set of all metric spaces homeomorphic to the Cantor set have infinite topological dimension. 

In \cite{Ishiki2021fractal}, 
by constructing topological embeddings of 
compact metrizable spaces
into the Gromov--Hausdorff space, 
the author  proved 
that 
the set of all compact metrizable spaces 
possessing 
prescribed  
topological dimension,  
Hausdorff dimension, 
packing dimension, 
upper box dimension, 
and Assouad dimension, 
and the set of all compact  ultrametric spaces
are  path-connected and
 have  infinite topological dimension. 
The proof is based on 
  the direct sum of metric spaces. 

In \cite{ishiki2021continua}, 
by a similar method to \cite{Ishiki2021fractal} 
(constructing a topological embedding of compact metrizable spaces), 
the author proved that 
each of the sets of all connected, path-connected, geodesic, and CAT(0) compact metric spaces is path-connected and their all non-empty open subsets have infinite topological dimension
in the Gromov--Hausdorff space. 
The proof  is based on the $\ell^{2}$-product metric of  the direct product  of metric spaces. 

As a related work to these author's papers \cite{Ishiki2021branching}, \cite{Ishiki2021fractal} and 
\cite{ishiki2021continua},  
in the present  paper, 
we prove that 
the set of all metric trees is path-connected and 
its all  non-empty open subsets have infinite topological dimension in the   Gromov--Hausdorff spaces. 
In contrast to \cite{Ishiki2021fractal} and 
\cite{ishiki2021continua}, 
we use  the  wedge sum of metric spaces in the proof. 

Let $(X, d)$ be a metric space. Let $x, y\in X$. 
A subset $S$ of a metric space is said to be a 
\emph{geodesic segment connecting $x$ and $y$} if 
there exist a closed interval $[a, b]$ of $\rr$ and 
an isometric embedding $f: [a, b]\to X$ such that 
$f(a)=x$, $f(b)=y$, and $S=f(X)$. 
A metric space is said to be a 
\emph{geodesic space} if 
for all two points, 
there exists a geodesic segment connecting  them. 
A metric space $(X, d)$ is said to be a \emph{metric tree} or 
\emph{$\rr$-tree} if it is 
a geodesic space and if geodesic segments 
$G_{1}$ and $G_{2}$ 
connecting $x, y$ and $y, z$ with $G_{1}\cap G_{2}=\{y\}$ satisfies that $G_{1}\cup G_{2}$ is a geodesic segment 
 connecting $x$ and $z$ for all distinct $x, y, z\in X$ (see \cite{evans2008probability}). 
For a metric space 
$(Z, h)$,  
and  for subsets 
$A$,  $B$ 
of 
$Z$, 
we denote by 
$\hdis(A, B; Z, h)$ 
the \emph{Hausdorff distance of 
$A$ 
and
$B$ 
in 
$(Z, h)$}. 
For  metric spaces 
$(X, d)$ 
and 
$(Y, e)$, 
the 
\emph{Gromov--Hausdorff distance} 
$\grdis((X, d),(Y, e))$ between 
$(X, d)$ 
and 
$(Y, e)$ 
is defined as 
the infimum of  all values  
$\hdis(i(X), j(Y); Z, h)$, 
where 
$(Z, h)$ 
is a metric space, 
 and 
$i: X\to Z$ 
and 
$j: Y\to Z$ 
are isometric embeddings. 
We denote by 
 $\grsp$  
 the  set of all isometry classes of
  non-empty compact metric spaces, and 
 denote by 
 $\grdis$
the Gromov--Hausdorff distance. 
The space 
$(\grsp, \grdis)$ is called   the 
\emph{Gromov--Hausdorff space}.
By abuse of notation, 
we represent an element of $\grsp$ as 
 a pair $(X, d)$ of a set $X$ and a metric $d$ rather than its isometry class. 
We denote by $\treeset$ the set of all metric trees in 
$\grsp$. 
Our  main result is the following theorem, which 
is an analogue of 
\cite[Theorem 1.3]{Ishiki2021fractal} and 
\cite[Theorem 1.1]{ishiki2021continua} for metric trees. 
\begin{thm}\label{thm:treeemb}
Let $n\in \zz_{\ge 1}$. 
Let $\{(X_{i}, d_{i})\}_{i=1}^{n+1}$ be a 
sequence  in $\treeset$ such that 
$\grdis((X_{i}, d_{i}), (X_{j}, d_{j}))>0$ for all  distinct $i, j$.
Let $\hcube$ be a compact metric space and 
$\{v_{i}\}_{i=1}^{n+1}$ be $n+1$ different points 
 in $\hcube$. 
Then, 
there exists a topological embedding 
$\Phi: \hcube\to \treeset$ such that 
$\Phi(v_{i})=(X_{i}, d_{i})$. 
\end{thm}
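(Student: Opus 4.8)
The plan is to build $\Phi$ as a parametrized wedge sum, exploiting that a wedge of metric trees is again a metric tree. First I would set up the basic machinery: given pointed compact metric trees $(T_{\alpha}, p_{\alpha})$ with $\sum_{\alpha}\di(T_{\alpha})<\infty$, form the wedge sum $\bigvee_{\alpha}T_{\alpha}$ by identifying all basepoints $p_{\alpha}$ to a single point $o$ and routing inter-summand distances through $o$. One checks this is a compact metric tree (geodesics between different summands must pass through the cut point $o$, so no loops arise), and proves the key continuity estimate: the isometry type of $\bigvee_{\alpha}T_{\alpha}$ depends continuously, in $\grdis$, on the summands and their scalings, with $\grdis\bigl(\bigvee_{\alpha}T_{\alpha},\bigvee_{\alpha}T'_{\alpha}\bigr)\le\sum_{\alpha}\grdis(T_{\alpha},T'_{\alpha})$ and, in particular, scaling a summand continuously down to its basepoint being continuous. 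This estimate is the engine that makes every map built below $\grdis$-continuous.

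Next I would encode the space $\hcube$. Fix a topological embedding $\iota=(\iota_{k})_{k\ge 1}\colon\hcube\to[0,1]^{\nn}$ into the Hilbert cube. For each $x$ I attach a \emph{coding tree} $K(x)$: a unit spine carrying, at a fixed discrete sequence of marked positions $t_{k}\nearrow 1$, a ``hair'' (a segment) of length $2^{-k}(1+\iota_{k}(x))$. The summable lengths guarantee compactness, the always-positive lengths keep every marked position visible, and the ordered sequence of hair lengths is an isometry invariant of $K(x)$; hence $x\mapsto K(x)$ is continuous and injective, the coordinate $\iota_{k}(x)$ being read off from the $k$-th hair. Thus $K$ alone is a topological embedding of $\hcube$ into $\treeset$, and the whole difficulty is to graft it onto the prescribed boundary data.

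To realize $\Phi(v_{i})=(X_{i},d_{i})$ I would use a partition of unity $\lambda_{1},\dots,\lambda_{n+1}$ on $\hcube$ with $\lambda_{i}(v_{j})=\delta_{ij}$, and form the interpolation tree $M(x)=\bigvee_{i=1}^{n+1}\bigl(\lambda_{i}(x)\cdot X_{i}\bigr)$, where $\lambda_{i}(x)\cdot X_{i}$ denotes $X_{i}$ with metric scaled by $\lambda_{i}(x)$ (a single point when $\lambda_{i}(x)=0$). By the continuity estimate $M$ is $\grdis$-continuous, and $M(v_{i})=X_{i}$. Choosing a scalar $\sigma(x)\ge 0$ vanishing exactly on $\{v_{1},\dots,v_{n+1}\}$, I set
\[
  \Phi(x)\;=\;M(x)\ \vee\ \bigl(\sigma(x)\cdot K(x)\bigr),
\]
all summands wedged at a common basepoint. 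Then $\Phi$ is $\grdis$-continuous, each $\Phi(x)$ is a metric tree, and $\Phi(v_{i})=X_{i}\vee\{\mathrm{pt}\}=X_{i}$.

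Everything then reduces to injectivity, which I expect to be the main obstacle: since $\hcube$ is compact and $(\grsp,\grdis)$ is Hausdorff, a continuous injection is automatically a topological embedding. To prove $\Phi(x)\cong\Phi(y)\Rightarrow x=y$ I must show the isometry type of $\Phi(x)$ recovers $x$. The intended mechanism is to locate, inside $\Phi(x)$, the coding branch $\sigma(x)\cdot K(x)$ as a recognizable sub-tree and then decode its hair-length sequence to recover $\iota(x)$, hence $x$; for boundary points injectivity instead comes from the hypothesis that the $X_{i}$ are pairwise non-isometric. The genuinely delicate point is the behavior near each $v_{i}$, where $\sigma(x)\to 0$ and the coding branch becomes arbitrarily small: there one must guarantee both that this tiny hairy branch is still canonically locatable (so that its scale-invariant hair pattern can be read off irrespective of the internal small-scale structure of $X_{i}$) and that no non-boundary $\Phi(x)$ is ever isometric to some $X_{i}$. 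Handling this degeneration carefully---for instance by giving the coding branch a fixed-shape initial stem that pins down its attaching point and separates it, by scale or by branch type, from the $X_{i}$-branches---is where the real work of the proof lies.
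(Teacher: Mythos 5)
Your overall architecture---wedge sums at a common basepoint, a Hilbert-cube-coded ``hairy'' tree, cutoff functions vanishing exactly at the $v_{i}$---matches the paper's, and your continuity and boundary-value claims are sound. But the two difficulties you flag at the end are precisely where the entire proof lives, and your proposal resolves neither. First, recognizability of the coding branch: your suggested fix, a ``fixed-shape initial stem'' separating it ``by scale or by branch type,'' cannot work, because the $X_{i}$ are \emph{arbitrary} metric trees. Near $v_{i}$ the coding branch is scaled by $\sigma(x)\to 0$ while the $X_{i}$-summand sits at scale $\approx 1$, and an arbitrary tree may contain an isometric copy of any prescribed small feature; so no fixed shape attached at a vanishing scale can be canonically located, and an isometry could carry your coding branch into the interior of an $X_{i}$-branch. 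The paper's solution is to modify the inputs themselves: every edge of $X_{i}$ is decomposed (Corollary \ref{cor:decom}) and replaced (Definition \ref{df:replace}) by a comb-shaped tree $\tbranch(\varphi(u))$, so that for $u\in\deltasecond$ every edge of the modified $X_{i}$-summands has diameter less than $2\varphi(u)=2^{-4}\xi(u)$, while the coding tree $(\ctree,\xi(u)\cdot\disdis[\rho(u,k)])$ has two arms of length at least $2^{-4}\xi(u)$. Hence the longest and second-longest edges of the whole wedge lie in the coding tree, any isometry must match them up, and $f(\ctree)=\ctree$ follows; the replacement depends continuously on $u$ and vanishes at the $v_{i}$, so it costs nothing at the boundary. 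Nothing in your construction plays this role.

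Second, you correctly note that one must rule out $\Phi(x)$ being isometric to some $X_{i}$ for $x\in\deltasecond$, but you give no mechanism, and in fact no direct argument is available---again because the $X_{i}$ are arbitrary, so one of them could a priori be isometric to a wedge of the very kind you build at an interior parameter. The paper never proves this exclusion at all. Instead it proves the weaker Proposition \ref{prop:embednm}: a continuous map $F$ on $H\times\widehat{m}$ with $F(v_{i},k)=(X_{i},d_{i})$ that is injective on $\deltasecond\times\widehat{m}$, i.e.\ $m$ parallel copies of the construction with pairwise disjoint images over $\deltasecond$ (disjointness across copies is obtained by embedding $H\times\widehat{m}$ itself into the coding cube $\qqcube$). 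Taking $m=n+2$, each of the $n+1$ spaces $X_{i}$ can lie in the image of $F(\deltasecond\times\{k\})$ for at most one $k$, so by the pigeonhole principle some copy $k$ misses all of them, and that copy is the desired embedding. Without this multiplicity-plus-pigeonhole device (or a substitute for it), your injectivity argument cannot be closed even after the coding branch is made recognizable; these two missing ideas are genuine gaps, not routine details.
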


Applying Theorem \ref{thm:treeemb} to 
$H=[0, 1]^{\aleph_{0}}$, 
we obtain:
\begin{cor}\label{cor:treecor}
The set $\treeset$
is   path-connected and  its  all  non-empty open 
subsets  have infinite topological dimension. 
\end{cor}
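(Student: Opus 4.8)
The plan is to derive both assertions directly from Theorem \ref{thm:treeemb}, taking $\hcube = [0, 1]^{\aleph_{0}}$ to be the Hilbert cube and exploiting two classical facts about it: it is path-connected (indeed convex), and every non-empty open subset of it contains a topological copy of the Hilbert cube itself. I would also use that the topological dimension of the Hilbert cube is infinite and that $\tdim$ is monotone under passage to subspaces of separable metric spaces.

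For path-connectedness, I would take two arbitrary points $p, q \in \treeset$. If $\grdis(p, q) = 0$, then $p$ and $q$ coincide in $\grsp$ and a constant path suffices; otherwise, write $p = (X_{1}, d_{1})$ and $q = (X_{2}, d_{2})$ with $\grdis(p, q) > 0$, and apply Theorem \ref{thm:treeemb} with $n = 1$ to the pair $\{(X_{i}, d_{i})\}_{i=1}^{2}$, with $\hcube = [0, 1]^{\aleph_{0}}$ and two distinct points $v_{1}, v_{2} \in \hcube$. Since $\hcube$ is path-connected, I choose a path $\gamma \colon [0, 1] \to \hcube$ from $v_{1}$ to $v_{2}$; then $\Phi \circ \gamma$ is a path in $\treeset$ joining $p$ and $q$. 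Hence $\treeset$ is path-connected.

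For the dimension statement, let $U$ be a non-empty open subset of $\treeset$ and fix a point $(X_{1}, d_{1}) \in U$. Since there exist at least two metric trees that are distinct in $\grsp$ (for instance a one-point space and a nondegenerate interval), I can pick a metric tree $(X_{2}, d_{2})$ with $\grdis((X_{1}, d_{1}), (X_{2}, d_{2})) > 0$. Applying Theorem \ref{thm:treeemb} with $n = 1$, $\hcube = [0, 1]^{\aleph_{0}}$, and distinct points $v_{1}, v_{2} \in \hcube$ yields a topological embedding $\Phi \colon \hcube \to \treeset$ with $\Phi(v_{1}) = (X_{1}, d_{1}) \in U$. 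Then $\Phi^{-1}(U)$ is an open neighborhood of $v_{1}$ in $\hcube$, so it contains a topological copy $K$ of the Hilbert cube; the restriction $\Phi|_{K}$ is a topological embedding of $K$ into $\treeset$ whose image lies in $U$. Thus $U$ contains a homeomorphic copy of the Hilbert cube, and by monotonicity of topological dimension together with the infinite-dimensionality of the Hilbert cube, $\tdim U = \infty$.

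At this level there is essentially no obstacle, since all the substance is carried by Theorem \ref{thm:treeemb}. The only points needing care are the two structural facts about the Hilbert cube—its path-connectedness and the fact that each of its non-empty open sets contains a copy of it (which holds because a basic open box, having all but finitely many factors equal to $[0, 1]$, contains a product of closed subintervals homeomorphic to $[0, 1]^{\aleph_{0}}$)—and the monotonicity of topological dimension for subspaces of separable metric spaces, all of which are standard.
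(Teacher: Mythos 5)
Your proposal is correct and follows exactly the route the paper intends: the paper derives the corollary simply by applying Theorem \ref{thm:treeemb} with $\hcube=[0,1]^{\aleph_{0}}$, and your argument is the standard fleshing-out of that one-line deduction (path-connectedness of the Hilbert cube for the first claim; a copy of the Hilbert cube inside every non-empty open set, plus monotonicity of dimension, for the second). The only details you add beyond the paper---handling the case $\grdis(p,q)=0$ via constant paths and exhibiting two non-isometric trees to feed into the theorem---are exactly the right bookkeeping steps.
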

We can also obtain 
an analogue of 
Theorem \ref{thm:treeemb} for rooted (pointed) proper metric trees 
(see Subsection \ref{subsec:adrmk}). 
Since it can be proven by 
 the same method of Theorem \ref{thm:treeemb},
 we omit the proof. 

\begin{ac}
The author would like to thank Takumi Yokota  
for 
raising  questions,  
for the many stimulating conversations, and 
for the many  helpful comments. 
\end{ac}


\section{Proof of Theorem}\label{sec:pot}

\subsection{Metric trees}
To prove our results, 
we first discuss the basic properties of metric trees. 
A metric space $(X, d)$ is said to be 
 \emph{$0$-hyperbolic} or 
 satisfy the \emph{four point condition} if 
 \[
 d(x, y)+d(z, t)
 \le 
 \max\lbran 
 d(x, z)+d(y, t), \, 
 d(y, z)+d(x, t)
 \rbran
 \]
 for all $x, y, z, t\in X$. 
The next is proven in 
\cite[Theorem 3.40]{evans2008probability}. 
\begin{prop}\label{prop:0hyp}
A metric space is a metric tree if and only if 
it is connected and $0$-hyperbolic. 
\end{prop}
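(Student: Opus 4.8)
The plan is to prove the two implications separately, treating the ``only if'' direction as structural and the ``if'' direction as the genuine work. For the \emph{only if} direction, assume $(X,d)$ is a metric tree. Connectedness is immediate: being geodesic, every pair of points is joined by a geodesic segment, which is the continuous image of a compact interval, so $X$ is path-connected and hence connected. It remains to verify the four point condition, and for this I would first extract two structural facts from the tree axiom: (a) geodesic segments between two given points are \emph{unique}, and (b) any three points $x,y,z$ admit a \emph{median} $m$, i.e.\ a single point lying on all of $[x,y]$, $[y,z]$, $[x,z]$ with $d(x,m)=\tfrac12(d(x,y)+d(x,z)-d(y,z))$ and the analogous formulas at $y,z$. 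Fact (a) follows by applying the tree axiom to two segments sharing both endpoints; fact (b) is obtained by letting $m$ be the endpoint of the largest common initial subsegment of $[x,y]$ and $[x,z]$ and using the tree axiom to reassemble the pieces into $[y,z]$.

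Granting (a) and (b), the four point condition becomes a finite computation. Given four points, apply the median construction to the relevant triples to see that the smallest subtree containing them is spanned by at most two branch points $u,v$ joined by a segment of some length $m\ge 0$, with the four points hanging off $u$ and $v$ in an ``H''-configuration. Writing each pairwise distance as a sum of segment lengths along this subtree, one finds that of the three quantities $d(x,y)+d(z,t)$, $d(x,z)+d(y,t)$, $d(x,t)+d(y,z)$, two are equal and exceed the third by exactly $2m$; this is precisely the assertion that the two largest coincide, which is equivalent to the stated inequality.

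For the \emph{if} direction I assume $(X,d)$ is connected and $0$-hyperbolic and must produce geodesics and verify the tree axiom. The organizing quantity is the Gromov product $(y\cdot z)_x=\tfrac12(d(x,y)+d(x,z)-d(y,z))$, in terms of which the four point condition is equivalent to $(x\cdot y)_w\ge\min\{(x\cdot z)_w,(y\cdot z)_w\}$ for all $w,x,y,z$. Fix $p,q$ with $\ell=d(p,q)$ and consider the metric interval $B=\{z: d(p,z)+d(z,q)=\ell\}$. Using the four point condition on $p,z,z',q$ one shows that for $z,z'\in B$ one has $d(z,z')=|d(p,z)-d(p,z')|$, so that $z\mapsto d(p,z)$ is an isometry of $B$ onto a subset of $[0,\ell]$ containing $0$ and $\ell$. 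The remaining task is to see that this image is all of $[0,\ell]$, and it is here that connectedness enters: the continuous map $\phi(x)=(x\cdot q)_p$ sends $X$ onto $[0,\ell]$, and the four point condition rigidifies the geometry enough that each parameter value is actually realized inside $B$.

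I expect this \emph{filling} step to be the main obstacle, since connectedness must be exploited globally rather than locally, and one cannot simply project an off-interval point to its foot without an existence (completeness-type) argument; the role of $0$-hyperbolicity is exactly to prevent gaps from surviving in the image. Once $B$ is identified with $[0,\ell]$ it is a geodesic segment, and the same isometry argument shows it is the unique one. With unique geodesics available, the tree axiom reduces to the median analysis already used above: the median of $x,y,z$ lies on both $G_{1}$ and $G_{2}$, so if these meet only at $y$ it must equal $y$, forcing $(x\cdot z)_y=0$, i.e.\ $d(x,z)=d(x,y)+d(y,z)$; hence $G_{1}\cup G_{2}$ is a geodesic segment connecting $x$ and $z$. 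This completes both directions.
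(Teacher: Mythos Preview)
The paper does not prove this proposition; it is quoted from \cite[Theorem~3.40]{evans2008probability} without argument. So there is nothing in the paper to compare your attempt against beyond the reference itself.

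Your ``only if'' direction via medians and the H-configuration is correct and standard. In the ``if'' direction you have correctly isolated the crux as the \emph{filling step}---showing that the metric interval $B=\{z:d(p,z)+d(z,q)=\ell\}$ surjects onto $[0,\ell]$ under $z\mapsto d(p,z)$---but you have not actually proved it. The intermediate value argument gives, for each $t\in[0,\ell]$, a point $x$ with $(x\cdot q)_p=t$; such an $x$ satisfies only $d(p,x)-d(x,q)=2t-\ell$ and need not lie in $B$, and your sentence ``the four point condition rigidifies the geometry enough that each parameter value is actually realized inside $B$'' is an assertion, not an argument. You yourself flag that projecting $x$ onto $B$ would require a completeness-type input that is unavailable. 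This step is exactly where the work in Evans' proof lies, and it is not short. One clean alternative is to invoke the theorem that every $0$-hyperbolic space embeds isometrically into an $\rr$-tree and then observe that a connected subset of an $\rr$-tree is automatically convex (removing any interior point of the geodesic between two of its points separates them), hence is itself an $\rr$-tree; but that embedding theorem is substantial in its own right. Either way, the filling step is the heart of the matter and your outline leaves it open.

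A minor circularity: in your last paragraph you verify the tree axiom by invoking ``the median analysis already used above,'' but that analysis derived medians \emph{from} the tree axiom. In the ``if'' direction you must instead construct the median from unique geodesics plus $0$-hyperbolicity (take $m\in[x,y]$ at distance $(y\cdot z)_x$ from $x$ and use the four point condition to check $m\in[x,z]\cap[y,z]$); this is routine once geodesics exist, but should be said explicitly to avoid the loop.
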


All metric trees are uniquely geodesic, i.e., 
for each pair of points, 
there uniquely exists a geodesic segment connecting 
the two points (see \cite[Lemmas 3.5 and 3.20]{evans2008probability}). 
Let $(X, d)$ be a metric tree. 
Based on the fact mentioned above, 
for $x, y\in X$, 
we denote by  $[x, y]$ the geodesic segment  connecting $x$ and $y$. 
We also put $[x, y]^{\zantei}=[x, y]\setminus \{x, y\}$. 

Since all metric subspace of a $0$-hyperbolic space 
is $0$-hyperbolic, 
by Proposition 
\ref{prop:0hyp}, 
we obtain:
\begin{lem}\label{lem:pathcontain}
A connected subset $S$ of a metric tree is 
a metric tree itself. 
In particular, for all $x, y\in S$ we have 
$[x, y]\subset S$. 
\end{lem}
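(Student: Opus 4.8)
The plan is to derive both assertions directly from Proposition \ref{prop:0hyp} together with the uniqueness of geodesic segments in a metric tree, so that essentially no computation is needed. First I would observe that the four point condition is an inequality quantified universally over quadruples of points, and therefore passes to every metric subspace: if $(X, d)$ is $0$-hyperbolic and $S \subseteq X$, then for all $x, y, z, t \in S$ the same inequality holds in $(S, d|_{S \times S})$, so the subspace is $0$-hyperbolic. Since $(X, d)$ is a metric tree, Proposition \ref{prop:0hyp} guarantees it is $0$-hyperbolic; combining this with the hypothesis that $S$ is connected, and applying Proposition \ref{prop:0hyp} in the reverse direction, we conclude that $(S, d|_{S \times S})$ is itself a metric tree. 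This settles the first statement.

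For the \emph{in particular} clause, I would fix $x, y \in S$ and exploit the geodesicity of $S$ just established. Being a metric tree, $S$ is a geodesic space, so there exists a geodesic segment $G$ in $S$ connecting $x$ and $y$; concretely, there are a closed interval $[a, b] \subset \rr$ and an isometric embedding $f\colon [a, b] \to S$ with $f(a) = x$, $f(b) = y$, and image $G$. The key point is that such an $f$, regarded as a map into the ambient space $X$, is still an isometric embedding, because $S$ carries the restricted metric $d|_{S \times S}$. Hence $G$ is also a geodesic segment in $X$ connecting $x$ and $y$.

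Finally I would invoke the fact recorded above that a metric tree is \emph{uniquely} geodesic: in $X$ there is exactly one geodesic segment joining $x$ and $y$, namely $[x, y]$. Since $G$ is a geodesic segment in $X$ joining the same pair of points, uniqueness forces $G = [x, y]$, whence $[x, y] = G \subseteq S$. The only step that requires any care is the transfer of geodesic segments between $S$ and $X$ — verifying that an isometric embedding into the subspace remains isometric into $X$, so that the uniqueness of geodesics in $X$ can be applied to identify the segment — while the remainder is a direct appeal to Proposition \ref{prop:0hyp}.
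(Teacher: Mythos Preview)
Your proposal is correct and follows essentially the same route as the paper: the paper's entire proof is the single clause ``since every metric subspace of a $0$-hyperbolic space is $0$-hyperbolic, by Proposition~\ref{prop:0hyp},'' which is exactly your first paragraph. Your treatment of the \emph{in particular} clause---lifting a geodesic in $S$ to a geodesic in $X$ and invoking the unique-geodesic property of metric trees stated just above the lemma---merely makes explicit what the paper leaves to the reader, so there is no substantive difference in approach.
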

 
The next is proven in 
\cite[Lemma 3.20]{evans2008probability}. 
\begin{lem}\label{lem:ylem}
Let $(X, d)$ be a metric tree. 
For all $o, x, y\in X$, 
there exists a unique $q\in X$ such that 
$[o, x]\cap [o, y]=[o, q]$. 
\end{lem}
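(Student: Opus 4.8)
The plan is to realize the intersection $[o,x]\cap[o,y]$ as a common initial segment of the two geodesics emanating from $o$, and to take $q$ to be its endpoint farthest from $o$. First I would record the elementary parametrization principle available in a uniquely geodesic space (metric trees are uniquely geodesic, as noted just before the statement): writing $f\colon[0,d(o,x)]\to X$ for the isometric embedding with $f(0)=o$ and image $[o,x]$, every point $p\in[o,x]$ satisfies $p=f(d(o,p))$, since $d(o,p)=d(f(0),f(t))=t$ whenever $p=f(t)$. Thus a point of $[o,x]$ is determined by its distance to $o$, and the restriction $f|_{[0,d(o,p)]}$ is an isometric embedding realizing a geodesic from $o$ to $p$; by unique geodesicity its image equals $[o,p]$, so $[o,p]\subset[o,x]$. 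The analogous statements hold for $[o,y]$.

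Next I would establish that $I:=[o,x]\cap[o,y]$ is closed downward toward $o$: if $p\in I$, then applying the previous paragraph to both geodesics gives $[o,p]\subset[o,x]$ and $[o,p]\subset[o,y]$, whence $[o,p]\subset I$. The set $I$ is nonempty (it contains $o$) and compact, being a closed subset of the compact segment $[o,x]$, so the continuous function $p\mapsto d(o,p)$ attains a maximum on $I$, say at a point $q$ with $r:=d(o,q)$. Downward closure yields $[o,q]\subset I$. For the reverse inclusion, take any $p\in I$: both $p$ and $q$ lie on $[o,x]$, so $p=f(d(o,p))$ and $q=f(r)$ with $d(o,p)\le r$, forcing $p\in f([0,r])=[o,q]$. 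Hence $I=[o,q]$.

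For uniqueness, suppose $[o,q]=[o,q']$ as subsets of $X$. Then $q'\in[o,q]$ and $q\in[o,q']$ give both $d(o,q')\le d(o,q)$ and $d(o,q)\le d(o,q')$, so $d(o,q)=d(o,q')$; since $q$ and $q'$ are the images of this common parameter under the isometric parametrization of the shared segment, $q=q'$.

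I expect the only genuinely delicate point to be the downward-closure step, which is precisely where the tree structure enters through unique geodesicity (recall that, by Proposition \ref{prop:0hyp}, a metric tree is a connected $0$-hyperbolic space). It is this that prevents the two geodesics from meeting, separating, and meeting again, and thereby guarantees the intersection is a single segment rather than a more complicated closed set; in a general geodesic space the conclusion would fail. Everything else is the soft topology of compact geodesic segments, and the containment $[o,q]\subset I$ is in any case consistent with Lemma \ref{lem:pathcontain}. One may alternatively identify $q$ explicitly through the Gromov product, $d(o,q)=\frac{1}{2}\bigl(d(o,x)+d(o,y)-d(x,y)\bigr)$, but the farthest-point description above is all that the statement requires.
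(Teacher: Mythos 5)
Your proof is correct, but it necessarily differs from the paper's, because the paper offers no argument at all for this lemma: it is quoted directly from \cite[Lemma 3.20]{evans2008probability}. Your self-contained derivation---parametrize $[o,x]$ by an isometric embedding $f$, observe that a point of a geodesic segment is determined by its distance from $o$, show that $I=[o,x]\cap[o,y]$ is downward closed and compact, and take $q$ to be a point of $I$ maximizing the distance to $o$---is sound at every step, and the uniqueness argument is likewise fine. Two comparative remarks. First, your argument uses nothing about metric trees beyond unique geodesicity; the parenthetical appeal to Proposition \ref{prop:0hyp} is decorative, since $0$-hyperbolicity never actually enters. Consequently your proof establishes the statement in any uniquely geodesic space (for instance any CAT(0) space), so your closing claim that downward closure is ``precisely where the tree structure enters'' overstates matters: what fails in a general geodesic space is unique geodesicity itself, not the subsequent soft argument. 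This extra generality, and the self-containedness, are what your route buys; the paper's citation buys brevity. Second, a caution about how your proof would sit in this paper: the paper's stated source for unique geodesicity is \cite[Lemmas 3.5 and 3.20]{evans2008probability}, i.e., it overlaps with the very lemma being proven, so to avoid an apparent circle one should note that unique geodesicity is available independently of the present statement (in \cite{evans2008probability} it is essentially part of the definition of an $\mathbb{R}$-tree, and it follows from connectedness together with $0$-hyperbolicity). Finally, you are right to present the link with Lemma \ref{lem:pathcontain} only as a consistency check: that lemma cannot be applied to $I$ directly, since the connectedness of $I$ is part of what is being proven.
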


Let $X$ be a topological space and $x\in X$. 
We denote by 
$\mea(x; X)$ the cardinality  of 
connected components of $X\setminus \{x\}$. 
We put
$\sansan{X}=\lbran\, 
x\in X\mid 
\mea(x; X)\ge 3
\, \rbran$, and put
$\ninnin{X}=\lbran\, 
x\in X\mid 
\mea(x; X)\le 2
\, \rbran$. 
Note that $\ninnin{X}=X\setminus \sansan{X}$, 
and note that 
$\sansan{X}$ and $\ninnin{X}$ are invariant under 
homeomorphisms. 

\begin{lem}\label{lem:hikaku}
Let $(X, d)$ be a metric tree.  
Let $C$ be  a connected component  of 
$\ninnin{X}$. 
Let $o, x, y\in C$. 
Then,  we have  $[o, x]\cap [o, y]=\{o\}$, or 
 $[o, x]\subset [o, y]$, or 
 $[o, y]\subset [o, x]$. 
\end{lem}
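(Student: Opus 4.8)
The plan is to pin down the point $q$ supplied by Lemma~\ref{lem:ylem} and to read off the three alternatives from its location on the two segments. Applying Lemma~\ref{lem:ylem} to $o, x, y$, I obtain a unique $q\in X$ with $[o, x]\cap [o, y]=[o, q]$; in particular $q\in [o, x]\cap [o, y]$. If $q=o$, then $[o,x]\cap[o,y]=\{o\}$. If $q=x$, then $[o,x]\cap[o,y]=[o,x]$, so $[o,x]\subset[o,y]$, and symmetrically $q=y$ yields $[o,y]\subset[o,x]$. Thus it suffices to rule out the remaining case $q\notin\{o,x,y\}$, in which $q$ is an interior point of both $[o,x]$ and $[o,y]$.

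So assume, for contradiction, that $q\notin\{o,x,y\}$. Since $C$ is a connected subset of the metric tree $X$, Lemma~\ref{lem:pathcontain} gives $[o,x]\subset C$ and $[o,y]\subset C$; as $q\in[o,x]$, we get $q\in C\subset\ninnin{X}$, i.e.\ $\mea(q;X)\le 2$. I will contradict this by showing $\mea(q;X)\ge 3$, that is, by exhibiting three pairwise distinct connected components of $X\setminus\{q\}$ containing $o$, $x$, and $y$ respectively.

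The key step is to locate $q$ on the segment $[x,y]$. Writing $[o,x]$ as the image of an arc-length isometric embedding, the subsegments $[o,q]$ and $[q,x]$ of $[o,x]$ meet only in $q$, and likewise $[o,q]\cap[q,y]=\{q\}$. Since $[q,x]\subset[o,x]$ and $[q,y]\subset[o,y]$, we obtain $[q,x]\cap[q,y]\subset[o,x]\cap[o,y]=[o,q]$, and combining with $[o,q]\cap[q,x]=\{q\}$ yields $[q,x]\cap[q,y]=\{q\}$. By the defining property of metric trees, the union $[q,x]\cup[q,y]$ of two geodesic segments meeting only at $q$ is a geodesic segment connecting $x$ and $y$, so by uniqueness of geodesics $[x,y]=[q,x]\cup[q,y]$; in particular $q\in[x,y]$ with $q\notin\{x,y\}$. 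Hence $q$ is an interior point of each of $[o,x]$, $[o,y]$, and $[x,y]$.

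Finally, I exploit separation: if any two of $o,x,y$ lay in a common connected component $D$ of $X\setminus\{q\}$, then Lemma~\ref{lem:pathcontain} applied to the connected set $D$ would force the geodesic segment joining them to lie in $D$; but that segment contains $q\notin D$, a contradiction. Therefore $o$, $x$, $y$ lie in three distinct components of $X\setminus\{q\}$, so $\mea(q;X)\ge 3$, contradicting $\mea(q;X)\le 2$. This rules out the case $q\notin\{o,x,y\}$ and proves the lemma. The main obstacle is the third paragraph, namely converting the purely metric datum $[o,x]\cap[o,y]=[o,q]$ into the topological identity $[x,y]=[q,x]\cup[q,y]$, since it is precisely this that turns the branch structure at $q$ into a statement about the number of components of $X\setminus\{q\}$; once $q\in[x,y]$ is secured, the separation argument via Lemma~\ref{lem:pathcontain} is routine.
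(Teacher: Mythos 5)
Your proof is correct and follows essentially the same route as the paper's: obtain $q$ from Lemma \ref{lem:ylem}, note that $q\notin\{o,x,y\}$ would force $\mea(q; X)\ge 3$, and contradict $q\in C\subset \ninnin{X}$ via Lemma \ref{lem:pathcontain}. The only difference is that the paper asserts $\mea(q; X)\ge 3$ without justification, whereas you supply the missing detail (showing $[q,x]\cap[q,y]=\{q\}$, hence $q\in[x,y]$, and then separating $o$, $x$, $y$ into three distinct components of $X\setminus\{q\}$), which is a faithful elaboration rather than a different argument.
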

\begin{proof}
It suffices to show that 
 the negation of the first conclusion (
 $[o, x]\cap [o, y]\neq\{o\}$) implies 
 either of  the other conclusions. 
By Lemma \ref{lem:ylem}, 
there exists $q\in X$ such that 
$[o, x]\cap [o, y]=[o, q]$. 
By   $[o, x]\cap [o, y]\neq \{o\}$, 
we have
  $q\neq o$. 
Suppose that  $q\neq x$ and $q\neq y$. 
Then we obtain $\mea\lbra q; X\rbra\ge 3$. 
Lemma \ref{lem:pathcontain} implies that 
$q\in C$, and hence $q\in \ninnin{X}$. 
This is a contradiction. 
Thus $q=x$ or $q=y$, which leads to the lemma. 
\end{proof}
\begin{prop}\label{prop:Ciso}
Let $(X, d)$ be a metric tree.  
If a connected component $C$ of 
$\ninnin{X}$ contains at least two points, 
then 
$C$ is 
isometric  to an interval of $\rr$. 
\end{prop}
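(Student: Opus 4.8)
The plan is to fix a basepoint $o\in C$ and parametrize $C$ by the \emph{signed} distance from $o$, showing that this map is an isometry of $C$ onto an interval. The first observation is that, since $C$ is connected, Lemma \ref{lem:pathcontain} guarantees that $C$ is itself a metric tree and that $[o,x]\subset C$ for every $x\in C$; thus every geodesic segment considered below lies entirely in $C$.

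The central structural step is to prove that, ``as seen from $o$'', there are at most two directions. I would define a relation on $C\setminus\{o\}$ by declaring $x\approx y$ when $[o,x]\cap[o,y]\neq\{o\}$. By Lemma \ref{lem:hikaku} this is equivalent to one of $[o,x],[o,y]$ containing the other, and from this nesting one checks that $\approx$ is an equivalence relation whose classes are totally ordered by inclusion of the segments $[o,\cdot]$. The key claim is that there are at most two classes. Suppose otherwise and pick representatives $x_{1},x_{2},x_{3}$ of three distinct classes, so $[o,x_{i}]\cap[o,x_{j}]=\{o\}$ for $i\neq j$. Choose $o_{i}\in[o,x_{i}]$ with $0<d(o,o_{i})<\min_{j}d(o,x_{j})$. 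Applying the concatenation axiom in the definition of a metric tree (with the middle point $o$), the relation $[o,o_{i}]\cap[o,o_{j}]=\{o\}$ forces $[o_{i},o_{j}]=[o_{i},o]\cup[o,o_{j}]$, so $o\in[o_{i},o_{j}]$. If $o_{i}$ and $o_{j}$ belonged to the same connected component of $X\setminus\{o\}$, that component would be a connected subset of $X$ containing $[o_{i},o_{j}]\ni o$ by Lemma \ref{lem:pathcontain}, a contradiction; hence $o_{1},o_{2},o_{3}$ lie in three distinct components of $X\setminus\{o\}$, giving $\mea(o;X)\ge 3$. This contradicts $o\in C\subset\ninnin{X}$.

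With at most two classes in hand, I would write $C\setminus\{o\}=C_{+}\cup C_{-}$ (with $C_{-}$ possibly empty) and define $\phi\colon C\to\rr$ by $\phi(x)=d(o,x)$ for $x\in C_{+}\cup\{o\}$ and $\phi(x)=-d(o,x)$ for $x\in C_{-}$. For $x,y$ in the same class, say $[o,x]\subseteq[o,y]$, one has $x\in[o,y]$ and hence $d(x,y)=d(o,y)-d(o,x)=|\phi(y)-\phi(x)|$; for $x\in C_{+}$ and $y\in C_{-}$ the concatenation axiom gives $[x,y]=[x,o]\cup[o,y]$, so $d(x,y)=d(x,o)+d(o,y)=|\phi(x)-\phi(y)|$. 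Thus $\phi$ preserves distances. It is injective, since on each class the map $x\mapsto d(o,x)$ is injective (equal lengths together with one segment containing the other force equality) and the two classes receive opposite signs. Being distance-preserving, $\phi$ is continuous, so $\phi(C)$ is a connected subset of $\rr$, that is, an interval; therefore $\phi$ is an isometry of $C$ onto an interval, as required.

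The main obstacle is precisely the at-most-two-classes step: converting the combinatorial hypothesis $\mea(o;X)\le 2$ into a statement about the geodesic directions emanating from $o$. This is where Lemma \ref{lem:hikaku} and the concatenation axiom do the real work, by letting one identify the germs of distinct geodesics at $o$ with distinct connected components of $X\setminus\{o\}$; once this is established, the remaining verification that the signed distance is an isometry onto an interval is routine.
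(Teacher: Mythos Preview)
Your proof is correct and follows essentially the same approach as the paper: define the signed distance from a basepoint $o\in C$ and use Lemma~\ref{lem:hikaku} together with $o\in\ninnin{X}$ to see that the map is a well-defined isometric embedding of $C$ into $\rr$. The only cosmetic difference is that the paper chooses $o$ in the interior of a segment $[a,b]\subset C$ and uses $a,b$ as the two reference directions, whereas you pick $o$ arbitrarily and develop the ``at most two directions'' step explicitly via the equivalence relation $\approx$ and the component argument; your treatment simply spells out in detail the step the paper summarizes in one sentence.
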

\begin{proof}
Since $C$ is connected, 
we only need to show the existence of an isometric
embedding of $C$ into $\rr$. 
Take points $o, a, b\in C$ such that 
$o\in [a, b]^{\zantei}\subset  C$. 
We define a map $f: C\to \rr$ by 
\[
f(x)=
\begin{cases}
d(o, x) \  \text{if $b\in [o, x]$ or $x\in [o, b]$;}\\
-d(o, x)\ \text{if $a\in [o, x]$ or $x\in [o, a]$.}
\end{cases}
\]
By Lemma \ref{lem:hikaku}, and by $o\not\in \sansan{X}$, 
the map $f$ is well-defined. 
By the definitions of $f$ and metric trees,  the map 
$f$ is an isometric embedding. 
This finishes the proof. 
\end{proof}

\begin{cor}\label{cor:connectedpiso}
Let $(X, d)$ be a metric tree. 
Then, 
every connected component $C$ of 
$\ninnin{X}$ is isometric to 
either a singleton or 
 an (non-degenerate) interval of $\rr$. 
\end{cor}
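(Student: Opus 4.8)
The plan is to obtain the statement almost immediately from Proposition \ref{prop:Ciso} by a dichotomy on the size of $C$. Since $C$ is a connected component of $\ninnin{X}$, it is in particular non-empty, so exactly one of two situations occurs: either $C$ is a singleton, or $C$ contains at least two points. In the first situation there is nothing to do, since a one-point metric space is by definition isometric to a singleton, giving the first alternative of the conclusion.

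In the second situation I would simply invoke Proposition \ref{prop:Ciso}, whose hypothesis (``$C$ contains at least two points'') is exactly what we are assuming. It produces an isometric embedding $f\colon C\to \rr$ whose image is an interval $I$. Because $f$ is injective and $C$ has at least two points, $I$ contains at least two points and is therefore non-degenerate; this is precisely the second alternative of the conclusion.

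I do not expect any genuine obstacle here, as all the analytic content---the construction of the isometric embedding and the verification that its image is an interval---has already been carried out in Proposition \ref{prop:Ciso}. The only thing the corollary adds is the elementary bookkeeping that a non-empty set is either a singleton or has at least two points, together with the remark that an interval carrying two distinct points cannot be degenerate. Thus the hard part is entirely absorbed into the preceding proposition, and the corollary is a formal consequence obtained by this case split.
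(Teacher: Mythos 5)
Your proof is correct and is exactly what the paper intends: the corollary is stated immediately after Proposition \ref{prop:Ciso} with no separate proof, precisely because it follows from that proposition by the trivial dichotomy (singleton versus at least two points) that you spell out. The only content beyond the proposition is the observation that an injective image of a two-point set forces the interval to be non-degenerate, which you note correctly.
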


For a metric space $(X, d)$ and a subset $A$, 
we denote by $\diam_{d}(A)$ the diameter of $A$. 
\begin{cor}\label{cor:decom}
Let $(X, d)$ be a  metric tree. 
Then, there exist a set $I$ and  points 
$\{a_{\soeji}\}_{\soeji\in I}$ and 
$\{b_{\soeji}\}_{\soeji\in I}$ in $X$ such that 
$\ninnin{X}=
\bigcup_{\soeji\in I}[a_{\soeji}, b_{\soeji}]$
and 
the set 
$[a_{\soeji}, b_{\soeji}]\cap [a_{\soeji'}, b_{\soeji'}]$ contains only   at most one point for all 
distinct $\soeji, \soeji'\in I$, 
and $\diam_{d}([a_{\soeji}, b_{\soeji}])\le 1$ 
for all $\soeji\in I$. 
\end{cor}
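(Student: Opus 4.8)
The plan is to reduce the statement to an elementary decomposition of intervals of $\rr$ and then transport it into $X$ through the isometries furnished by Corollary \ref{cor:connectedpiso}. First I would fix the connected components $\{C_{j}\}_{j}$ of $\ninnin{X}$; these are pairwise disjoint and cover $\ninnin{X}$. By Corollary \ref{cor:connectedpiso}, each $C_{j}$ is either a singleton or isometric, via some surjective isometry $\iota_{j}\colon I_{j}\to C_{j}$, to a non-degenerate interval $I_{j}\subset \rr$. A singleton component $\{p\}$ contributes the degenerate segment $[p,p]=\{p\}$, whose diameter is $0\le 1$. For a non-degenerate $C_{j}$, the key observation is that $\iota_{j}$ carries coordinate segments to tree segments: if $s<t$ in $I_{j}$ and $a=\iota_{j}(s)$, $b=\iota_{j}(t)$, then $\iota_{j}([s,t])$ is a geodesic segment from $a$ to $b$, so by the uniqueness of geodesic segments in metric trees it equals $[a,b]$, and moreover $[a,b]=\iota_{j}([s,t])\subset C_{j}\subset \ninnin{X}$ (cf.\ Lemma \ref{lem:pathcontain}). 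Thus any decomposition of $I_{j}$ into closed subintervals pushes forward to a decomposition of $C_{j}$ into geodesic segments of the same lengths.

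Next I would prove the one-dimensional statement: every interval $I\subset \rr$ can be written as a union of a countable family of closed subintervals, each of length at most $1$, any two of which meet in at most one point. To do this I would partition $\rr$ by the integers and set $I_{k}=I\cap [k,k+1]$ for $k\in \zz$; each non-empty $I_{k}$ is a subinterval of length $\le 1$, and for $k\neq k'$ the intersection $I_{k}\cap I_{k'}\subset [k,k+1]\cap [k',k'+1]$ contains at most one point. The only $I_{k}$ that fail to be closed are the at most two cells meeting an open endpoint of $I$; such a piece is a half-open interval $(\alpha, t]$ or $[s,\beta)$ of length $\le 1$, which I would further subdivide geometrically, e.g.\ $(\alpha, t]=\bigcup_{n\ge 0}[\, \alpha+(t-\alpha)2^{-(n+1)},\, \alpha+(t-\alpha)2^{-n}\,]$, a countable union of closed intervals of lengths $\le 1$ meeting only in consecutive endpoints, whose top piece still meets the adjacent cell $I_{k\pm 1}$ in the single boundary point. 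This yields the desired family for $I$.

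Finally I would assemble the pieces. Letting $I$ be the disjoint union, over all components, of the (countable) index sets produced above together with one index for each singleton component, and defining $a_{\soeji}, b_{\soeji}$ as the $\iota_{j}$-images of the endpoints of the corresponding subinterval (with $a_{\soeji}=b_{\soeji}$ for singletons), the first paragraph gives $\bigcup_{\soeji}[a_{\soeji}, b_{\soeji}]=\bigcup_{j}C_{j}=\ninnin{X}$ and $\diam_{d}([a_{\soeji}, b_{\soeji}])\le 1$. For the overlap condition, two segments coming from the same component have preimages meeting in at most one point, hence meet in at most one point because $\iota_{j}$ is injective; two segments from different components are disjoint since distinct components are disjoint. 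I expect the only genuinely delicate point to be the second paragraph — handling the non-closed endpoints of the $I_{j}$ while keeping all overlaps single points — since the reduction and the final verification are routine once the isometries and the uniqueness of geodesics are in hand.
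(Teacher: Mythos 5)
Your proof is correct and takes essentially the same approach as the paper: the paper's own (one-sentence) proof likewise reduces the corollary to the fact that every interval of $\rr$ is an at most countable union of closed subintervals of diameter at most $1$ meeting pairwise in at most one point, and then transports this through Proposition \ref{prop:Ciso}. Your write-up simply makes explicit the details the paper leaves implicit — the integer partition, the geometric subdivision at open endpoints, and the use of unique geodesicity to identify $\iota_{j}([s,t])$ with $[a,b]$.
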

\begin{proof}
Since 
every interval of $\rr$ can be represented as the 
union of an at most countable  family of closed intervals with diameter $\le 1$  such that the intersection of  each pair of different members in  the family contains only at most one point, 
we obtain the corollary
by Proposition \ref{prop:Ciso}.  
\end{proof}

\begin{rmk}
There exists a metric tree $(X, d)$ such that the set 
$\sansan{X}$ is dense in $X$. 
By recursively applying  Proposition 
\ref{prop:replacetreeconti} to 
the metric tree $[0, 1]$, we obtain such a tree. 
Thus, in Corollary \ref{cor:decom}, it can happen that 
the index set $I$ is   empty. 
\end{rmk}

\subsection{Specific metric trees}
To show the existence a topological embedding stated in 
Theorem \ref{thm:treeemb}, we construct specific metric trees. 

\begin{df}
We put
$\intvl=[0, 1]$. 
We  construct a family of comb-shaped metric trees parametrized by $\intvl$.
In what follows, we fix a sequence $\{c_{n}:\rr\to \rr\}_{n\in \zz_{\ge 0}}$ of 
 continuous functions such that 
for each $n\in \zz_{\ge 0}$, 
 we have 
$c_{n}(s)=0$
for all $s\in [2^{-n}, \infty)$, 
 and 
$c_{n}(s)\in (0,  1]$ for all $s\in [-\infty, 2^{-n})$. 
To simplify our description, 
we represent  an element $(x, s)$ of 
$\intvl\times \intvl$ as 
$x_{s}$. For example, $0_{0}=(0, 0)$, and 
$(1/3)_{1/2}=(1/3, 1/2)$. 
Let  $w$ denote the metric on $\intvl \times \intvl$ 
defined by $w(x_{s}, y_{t})=s+|x-y|+t$. 
Then, the space $(\intvl \times \intvl, w)$ become  a metric tree. 
For each $n\in \zz_{\ge 0}$
Put 
$I_{n}=
\lbran\, 
m\cdot 2^{-(n+1)}
\mid 
m\in 
\lbran 
0, \dots, 2^{n+1}
\rbran
\, 
\rbran$. 
We also put 
$J_{0}=I_{0}$ and 
$J_{n+1}=I_{n+1}\setminus I_{n}$ for 
$n\in \zz_{\ge 0}$. 
For each $s\in \intvl$, 
we define a subset $\tbranch(s)$ of $\intvl\times \intvl$ by 
\[
\tbranch(s)=
\mathbb{I}\times \lbran0\rbran\cup 
\bigcup_{n\in \zz_{\ge 0}}\bigcup_{a\in J_{n}}
\{a\}\times [0, s\cdot c_{n}(s)]. 
\]
Let 
$w[s]=w|_{\tbranch(s)^{2}}$. 
Then $(\tbranch(s), w[s])$ is a compact metric tree 
for all $s\in \intvl$. Note  that $(\tbranch(0), w[0])$ is isometric to 
$\intvl$. 
\end{df}
By the definition of $\tbranch(s)$, we obtain the next two lemmas. 
\begin{lem}\label{lem:treeconti}
Let  $s\in [0, 1)$. Then the following hold true. 
\begin{enumerate}
\item 
If $s=0$, for all $t\in \intvl$ we have 
$\hdis\lbra\tbranch(t), \tbranch(0); \intvl\times \intvl, w\rbra
\le t$. 
\item 
If $s\neq 0$,  taking  $n\in \zz_{\ge 0}$ with 
$2^{-(n+1)}\le s<2^{-n}$, 
 for all $t\in \intvl$ with $|s-t|<2^{-(n+2)}$,  
we have 
\[
\hdis\lbra\tbranch(t), \tbranch(s); \intvl\times \intvl, w\rbra
\le \max_{0\le i\le n+1}|s\cdot c_{i}(s)-t\cdot c_{i}(t)|. 
\]
\end{enumerate}
\end{lem}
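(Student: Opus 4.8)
The plan is to estimate directly the two one-sided suprema that define the Hausdorff distance, reading everything off the definition of $\tbranch(\cdot)$ and exploiting that $\tbranch(s)$ and $\tbranch(t)$ share the common base $\intvl\times\{0\}$ and differ \emph{only} in the lengths of their teeth. For a parameter $\sigma\in\intvl$ and a dyadic point $a\in J_{m}$, the tooth of $\tbranch(\sigma)$ rooted at $a$ is the segment $\{a\}\times[0,\sigma\cdot c_{m}(\sigma)]$; hence the whole discrepancy between the two combs is controlled by the numbers $s\cdot c_{m}(s)$ and $t\cdot c_{m}(t)$.

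First I would record the elementary geometric fact that, in the tree metric $w$, a point $a_{u}=(a,u)$ with $u>0$ is joined to any point with a different first coordinate by a geodesic through the foot $a_{0}$, so that $w(a_{u},p)\ge u$ whenever the first coordinate of $p$ differs from $a$. Consequently, for a tooth point $a_{u}\in\tbranch(t)$ with $a\in J_{m}$, the nearest point of $\tbranch(s)$ lies on the tooth rooted at the same $a$, and its distance is $\max\{0,\,u-s\cdot c_{m}(s)\}$ (the foot $a_{0}$ being chosen when the $s$-tooth is shorter or absent). Since $u\le t\cdot c_{m}(t)$, this is at most $\max\{0,\,t\cdot c_{m}(t)-s\cdot c_{m}(s)\}$. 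Running the symmetric argument with $s$ and $t$ exchanged, and observing that points of the common base are at distance $0$ from both sets, I obtain
\[
\hdis\lbra\tbranch(t), \tbranch(s); \intvl\times \intvl, w\rbra
\le
\sup_{m\in\zz_{\ge 0}}\bigl|\,s\cdot c_{m}(s)-t\cdot c_{m}(t)\,\bigr|.
\]

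For assertion (1), set $s=0$. Then $\tbranch(0)=\intvl\times\{0\}\subset\tbranch(t)$, so the base contributes distance $0$, while each tooth point $a_{u}\in\tbranch(t)$ satisfies $u\le t\cdot c_{m}(t)\le t$ because $c_{m}\le 1$; this gives the bound $t$ at once (equivalently, it is the displayed inequality with the factor $s$ vanishing).

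For assertion (2), the remaining task is to pin down which indices $m$ actually contribute to the supremum, and this is exactly where the hypotheses enter. Since $2^{-(n+1)}\le s<2^{-n}$, the defining property of $c_{m}$ yields $c_{m}(s)=0$ for all $m\ge n+1$. The condition $|s-t|<2^{-(n+2)}$ forces $t>2^{-(n+1)}-2^{-(n+2)}=2^{-(n+2)}$, whence $c_{m}(t)=0$ for all $m\ge n+2$. Thus the only terms that can be non-zero are those with $0\le m\le n+1$, and the supremum collapses to $\max_{0\le i\le n+1}|s\cdot c_{i}(s)-t\cdot c_{i}(t)|$, as claimed. The one genuinely delicate point is this index bookkeeping: one must verify that the hypothesis $|s-t|<2^{-(n+2)}$ is precisely what prevents $\tbranch(t)$ from sprouting a tooth at level $n+2$ or higher that $\tbranch(s)$ cannot match. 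The Hausdorff estimate itself is routine once the shared-base structure and the ``geodesics leave a tooth only through its foot'' observation are in hand.
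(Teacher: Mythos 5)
Your proof is correct, and it takes the same route the paper intends: the paper states this lemma without proof (``By the definition of $\tbranch(s)$\dots''), and your argument is exactly the direct verification from the definition, with the key bookkeeping---$c_{m}(s)=0$ for $m\ge n+1$ and, via $t>2^{-(n+2)}$, $c_{m}(t)=0$ for $m\ge n+2$---done correctly. Note only that you tacitly (and rightly) read the metric $w$ as the comb metric, i.e.\ $w(x_{s},x_{t})=|s-t|$ on a common tooth, which is what the paper's formula $w(x_{s},y_{t})=s+|x-y|+t$ must mean for $x\neq y$.
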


\begin{lem}\label{lem:horizonlength}
Let $n\in \zz_{\ge 0}$ and let $s\in (0, 2^{-n})$. 
Let $C$ be a connected component of 
$\ninnin{\tbranch(s)}$. 
Then we have 
$\diam_{w[s]}(C)<2^{-n}$. 
\end{lem}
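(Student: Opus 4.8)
The plan is to reduce the diameter of $C$ to the \emph{length} of $C$ as an interval, to locate the branch points of $\tbranch(s)$ explicitly from the comb structure, and then to bound each of the three possible shapes of $C$ separately. The degenerate case $s\,c_{m}(s)=0$ never arises for the teeth under consideration because $s>0$, so all teeth that are present have positive height, and I may assume $C$ is not a singleton (otherwise $\diam_{w[s]}(C)=0$).

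First I would invoke Corollary \ref{cor:connectedpiso}: being a connected component of $\ninnin{\tbranch(s)}$, the set $C$ is isometric either to a singleton or to a non-degenerate interval of $\rr$. In the interval case, $\diam_{w[s]}(C)$ equals the length of that interval, so it suffices to bound the total length of the horizontal and vertical segments comprising $C$. Next I would identify $\sansan{\tbranch(s)}$: by the definition of $w$, a point of $\tbranch(s)$ has three or more complementary components exactly when it is an interior axis point $(a,0)$ with $a\in J_{m}\cap(0,1)$ at which a tooth of positive height is attached, i.e.\ $c_{m}(s)>0$. Since $s<2^{-n}$ forces $c_{m}(s)>0$ for every $m\le n$, every point of $I_{n}\cap(0,1)$ is such a branch point; as consecutive points of $I_{n}$ are $2^{-(n+1)}$ apart, deleting $\sansan{\tbranch(s)}$ cuts the axis $\intvl\times\{0\}$ into pieces of length at most $2^{-(n+1)}$.

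I would then split into three types. If $C$ is contained in a single tooth $\{a\}\times(0,\,s\,c_{m}(s)]$, then $\diam_{w[s]}(C)\le s\,c_{m}(s)\le s<2^{-n}$. If $C$ is a horizontal piece both of whose endpoints are branch points, then its length is at most the gap $2^{-(n+1)}<2^{-n}$. The remaining case is the component meeting an endpoint of the axis: the point $0_{0}$ (and symmetrically $1_{0}$) has degree two, since removing it leaves only the endpoint tooth and the rest of the tree, so this $C$ is the union of the horizontal piece running from $0$ to the first branch point on its right together with the endpoint tooth $\{0\}\times[0,\,s\,c_{0}(s)]$. Because these two segments meet at $0_{0}$ and point in different tree directions, the length of $C$ is the \emph{sum} of the horizontal gap and the height $s\,c_{0}(s)$.

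The main obstacle is exactly this last, ``$\mathrm{L}$-shaped'' endpoint case: one must check that the horizontal gap to the first branch point plus the endpoint tooth height $s\,c_{0}(s)$ stays strictly below $2^{-n}$. I expect this to be the only place where the fine structure of the $\{c_{m}\}$ enters — the interior and tooth cases give the bound for free, but at the endpoint both contributions must be controlled simultaneously, using that the first branch point to the right of $0$ already sits at the dyadic scale dictated by $s$ while the tooth height is governed by $s<2^{-n}$. Once that combined estimate is established, the three cases together yield $\diam_{w[s]}(C)<2^{-n}$ for every component $C$, which completes the proof.
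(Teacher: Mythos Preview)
Your case split (tooth, interior horizontal piece, endpoint $\mathrm{L}$-shape) is exactly the right decomposition, and the bounds in the first two cases are correct: a tooth has length $s\,c_m(s)\le s<2^{-n}$, and consecutive interior branch points lie in $I_N\cap(0,1)$ with $N\ge n$, hence are at most $2^{-(N+1)}\le 2^{-(n+1)}$ apart. The paper itself gives no argument beyond ``by the definition of $\tbranch(s)$,'' so there is nothing further to compare on method.

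The gap you isolate in the endpoint case is real, and it cannot be closed from the hypotheses the paper actually places on the $c_m$. Take $n=0$ and choose any admissible continuous $c_0$ with $c_0(1/2)=1$ (the conditions allow this: only $c_0(s)=0$ for $s\ge 1$ and $c_0(s)\in(0,1]$ for $s<1$ are required). At $s=1/2$ one has $c_m(1/2)=0$ for every $m\ge 1$, so the sole interior branch point is $(1/2,0)$; the component of $\ninnin{\tbranch(1/2)}$ containing $0_0$ runs from the tooth tip $(0,1/2)$ down to $0_0$ and along $[0,1/2)\times\{0\}$, and its diameter equals $\tfrac12+\tfrac12=1=2^{-0}$, violating the strict inequality. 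More generally, for $s\in[2^{-(N+1)},2^{-N})$ the first interior branch point to the right of $0$ sits exactly at $2^{-(N+1)}$, so the endpoint estimate needs $s\,c_0(s)<2^{-(N+1)}$, a decay condition on $c_0$ that is nowhere imposed.

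So your instinct that ``the fine structure of the $\{c_m\}$'' enters here is correct, but you should not leave it as an expectation: state explicitly that the lemma, as written, fails for some admissible $\{c_m\}$, and that one must either add a hypothesis such as $s\,c_0(s)<2^{-(N+1)}$ on the range $2^{-(N+1)}\le s<2^{-N}$, or observe that in the paper's sole use of the lemma (inside the replaced trees $Y_i(s)$ and $W(u,k)$) the points $0_0$ and $1_0$ are glued to a larger tree and can acquire degree $\ge 3$, in which case the endpoint $\mathrm{L}$-shaped components do not occur. Either route repairs the argument; neither is automatic from what is written.
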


A topological  space is said to be a \emph{Hilbert cube} if 
it is homeomorphic to the countable power of 
the closed unit interval $[0, 1]$ of $\rr$.

We now introduce a family of star-shaped  metric trees parametrized by a Hilbert cube (Definition \ref{df:tree}),  which  was  first constructed in  
\cite{ishiki2021continua}. 

We define 
$\qqcube=\prod_{i=1}^{\infty}[2^{-2i}, 2^{-2i+1}]$. 
Note that every $\sqsq{a}=\{a_{i}\}_{i\in \zz_{\ge 1}}\in \qqcube$ satisfies $a_{i}<1$ and $a_{i+1}<a_{i}$ for all $i\in \zz_{\ge 1}$  and $\lim_{i\to \infty}a_{i}=0$. 
We define a metric $\tau$ on $\qqcube$ by 
$\tau(x, y)=\sup_{i\in \zz_{\ge 1}}|x_{i}-y_{i}|$. 
Then,  $\tau$ generates the topology which makes 
$\qqcube$  a Hilbert cube.

\begin{df}\label{df:tree}
Let $\sqsq{a}=\{a_{i}\}_{i\in \zz_{\ge 1}}\in \qqcube$. 
We supplementally put $a_{0}=1$. 
Put 
$\ctree=\{(0, 0)\}\cup (0, 1]\times \zz_{\ge 0}$. 
To simplify our description, 
we represent an element $(s, i)$ of $\ctree$ as 
$s_{i}$. For example, $0_{0}=(0, 0)$, 
$1_{n}=(1, n)$, 
and $(1/2)_3=(1/2, 3)$. 
We define a metric $\disdis[\sqsq{a}]$ on
$\ctree$ by 
\[
\disdis[\sqsq{a}](s_{i}, t_{j})
=
\begin{cases}
a_{i}|s-t| & \text{ if $i=j$ or $st=0$;}\\
a_{i}s+a_{j}t & \text{ otherwiese.}
\end{cases}
\]
Then the space $(\ctree, \disdis[\sqsq{a}])$ is 
a compact metric tree. 
Note that 
even if $\sqsq{a}\neq \sqsq{b}$, 
the metrics 
$\disdis[\sqsq{a}]$ and $\disdis[\sqsq{b}]$ generate 
the same topology on $\ctree$. 
\end{df}

The following propositions are 
\cite[Propositions 2.2 and 2.3]{ishiki2021continua}. 
\begin{prop}\label{prop:qqinj}
Let $\sqsq{a}=\{a_{i}\}_{i\in \zz_{\ge 1}}$ and 
$\sqsq{b}=\{b_{i}\}_{i\in \zz_{\ge 1}}$ be in $\qqcube$. 
Let $K, L\in (0, \infty)$. 
If $(\ctree, K\cdot\disdis[\sqsq{a}])$ and 
$(\ctree, L\cdot \disdis[\sqsq{b}])$
are isometric to each other, 
then $\sqsq{a}=\sqsq{b}$. 
\end{prop}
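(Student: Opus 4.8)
The plan is to prove Proposition~\ref{prop:qqinj} by extracting the sequence $\sqsq{a}$ from the intrinsic metric geometry of $(\ctree, K\cdot\disdis[\sqsq{a}])$ in a way that is invariant under isometry, and then comparing the two invariants. The key structural feature of $\ctree$ is that it is a star: the point $0_0$ is the unique branch point, and for each $i\in\zz_{\ge 0}$ the set $(0,1]\times\{i\}$ together with $0_0$ forms an isometric copy of an interval $[0, a_i K]$ emanating from the center. First I would verify that $0_0$ is characterized metrically: it is the only point $p$ for which $\mea(p;\ctree)=\aleph_0$ (every other point has degree $\le 2$), so any isometry between the two spaces must send the center to the center.

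Next, the plan is to compute, for the center $o=0_0$, the set of distances $\{\,d(o, x) : x \text{ is an endpoint of a branch}\,\}$, which should recover the scaled values $\{K a_i\}_{i\ge 1}$ together with the apex length. More precisely, I would consider the function sending each branch to its length, i.e. to $K a_i$ for the $i$-th branch (using $\diam$ of the branch, or equivalently $\disdis[\sqsq{a}](o, 1_i)=K a_i$). Since an isometry carries $o$ to $o'$ and permutes the branches while preserving their lengths, the \emph{multiset} $\{K a_i\}_{i\ge 1}$ must equal $\{L b_i\}_{i\ge 1}$. Here the hypotheses on $\qqcube$ are crucial: every $\sqsq{a}$ satisfies $a_{i+1}<a_i$ and $a_i\in[2^{-2i}, 2^{-2i+1}]$, so the lengths are strictly decreasing and the multisets are in fact strictly decreasing sequences; matching them as ordered sequences gives $K a_i = L b_i$ for every $i$.

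From $K a_i = L b_i$ for all $i$ the remaining task is to show $K=L$ and hence $a_i=b_i$. The plan here is to exploit the disjointness of the ranges $[2^{-2i}, 2^{-2i+1}]$ across different indices: since both $a_i$ and $b_i$ lie in the same interval $[2^{-2i},2^{-2i+1}]$, the ratio $K/L = b_i/a_i$ lies in $[2^{-2i}/2^{-2i+1}, 2^{-2i+1}/2^{-2i}]=[1/2, 2]$ for every $i$. Taking $i\to\infty$ and using $\lim_i a_i=\lim_i b_i=0$ will not by itself pin down the ratio, so instead I would argue that the constant ratio $K/L$ must equal $b_i/a_i$ simultaneously for all $i$; combined with the constraint that the overall scale is fixed by, say, the total diameter or the value at index $0$ (recall $a_0=b_0=1$ is not scaled), one forces $K=L$. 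Concretely, matching the branch of index $0$, whose length is $K\cdot a_0=K$ against $L\cdot b_0=L$, immediately yields $K=L$, and then $a_i=b_i$ follows for every $i$.

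The main obstacle I anticipate is the careful justification that an isometry must respect the branch structure as a \emph{length-preserving bijection} of the branches, rather than merely matching distance sets; this requires knowing that the branches are exactly the connected components of $\ninnin{\ctree}$ (or of $\ctree\setminus\{o\}$) and that an isometry maps components to components preserving their diameters. Once that combinatorial rigidity is in hand, the numerical comparison using the disjoint ranges $[2^{-2i},2^{-2i+1}]$ and the normalization at index $0$ is routine. Since this proposition is quoted from \cite{ishiki2021continua}, I expect the original argument proceeds along exactly these lines.
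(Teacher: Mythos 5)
Your proof is correct, and it is the natural argument for this statement; note that the present paper never proves Proposition \ref{prop:qqinj} internally --- it imports it from \cite[Propositions 2.2 and 2.3]{ishiki2021continua} --- so there is no in-paper proof to compare against, only the expected one, which yours matches: the center is fixed because it is the unique point $p$ with $\mea(p;\ctree)=\aleph_{0}$, the components of $\ninnin{\ctree}$ (the open branches) are permuted by the isometry with diameters preserved, and the strictly decreasing branch lengths then match termwise. One small tightening: the length-multiset identity should be stated over \emph{all} indices $i\ge 0$, including the apex branch of length $K a_{0}=K$ (a priori the isometry could send that branch to a branch of positive index); since both sequences $\{Ka_{i}\}_{i\ge 0}$ and $\{Lb_{i}\}_{i\ge 0}$ are strictly decreasing, multiset equality forces $Ka_{i}=Lb_{i}$ for every $i\ge 0$, and the case $i=0$ (where $a_{0}=b_{0}=1$ is unscaled) gives $K=L$ exactly as you conclude, so the earlier detour through the ratio bound $K/L\in[1/2,2]$ can simply be deleted.
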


\begin{prop}\label{prop:qqconti}
For all  $\sqsq{a}, \sqsq{b}\in \qqcube$, 
we obtain
\[
\sup_{x, y\in \ctree}
|\disdis[\sqsq{a}](x, y)-\disdis[\sqsq{b}](x, y)|\le 2\tau(\sqsq{a}, \sqsq{b}).
\] 
\end{prop}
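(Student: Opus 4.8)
The plan is to reduce the assertion to a pointwise estimate and then take the supremum, exploiting that $\tau$ is the supremum metric. I would first set $\delta=\tau(\sqsq{a},\sqsq{b})$, so that $|a_{i}-b_{i}|\le\delta$ for every $i\in\zz_{\ge 0}$ (here the supplementary convention $a_{0}=b_{0}=1$ makes the index-$0$ comparison vanish, and in particular the root $0_{0}$ carries the same scalar for both tuples). I would also record the elementary fact that every first coordinate occurring in a point of $\ctree$ lies in $[0,1]$, so any two such coordinates $s,t$ satisfy $|s-t|\le 1$ and $s+t\le 2$. The point is that for a \emph{fixed} pair $x=s_{i}$, $y=t_{j}$, the conditions ``$i=j$ or $st=0$'' versus ``otherwise'' depend only on $i,j,s,t$ and not on the tuple, so $\disdis[\sqsq{a}]$ and $\disdis[\sqsq{b}]$ always fall into the same branch of the definition and may be compared term by term.

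The computation then splits into the two cases of Definition \ref{df:tree}. In the first case both distances have the form ``scalar times $|s-t|$'', and I would estimate
\[
|\disdis[\sqsq{a}](x,y)-\disdis[\sqsq{b}](x,y)|=|a_{i}-b_{i}|\,|s-t|\le\delta,
\]
using $|s-t|\le 1$. In the remaining case both distances take the additive form $a_{i}s+a_{j}t$, respectively $b_{i}s+b_{j}t$, whence
\[
|\disdis[\sqsq{a}](x,y)-\disdis[\sqsq{b}](x,y)|=|(a_{i}-b_{i})s+(a_{j}-b_{j})t|\le|a_{i}-b_{i}|\,s+|a_{j}-b_{j}|\,t\le\delta(s+t)\le 2\delta.
\]
In either case the difference is at most $2\delta$, and since $x,y\in\ctree$ were arbitrary, taking the supremum gives exactly $\sup_{x,y\in\ctree}|\disdis[\sqsq{a}](x,y)-\disdis[\sqsq{b}](x,y)|\le 2\tau(\sqsq{a},\sqsq{b})$.

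The argument is essentially a direct estimate, so I do not expect a genuine obstacle; the factor $2$ is forced and arises solely from the two summands in the additive case $i\neq j$ with $st\neq 0$, where the positions $s,t$ each contribute a term bounded by $\delta$. The only point demanding a little care is the bookkeeping around the root in the degenerate subcase $st=0$: there one must check that the relevant scalar is the one attached to the non-root endpoint, but this produces nothing worse than the bound $\delta$ above (and in fact the root's scalar agrees for $\sqsq{a}$ and $\sqsq{b}$ since $a_{0}=b_{0}=1$), so it does not affect the final constant.
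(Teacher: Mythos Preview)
Your proof is correct: the key observation that for each fixed pair $(s_{i},t_{j})$ both metrics $\disdis[\sqsq{a}]$ and $\disdis[\sqsq{b}]$ fall into the same branch of Definition~\ref{df:tree} allows the termwise comparison, and your bounds $|a_{i}-b_{i}|\,|s-t|\le\delta$ and $|a_{i}-b_{i}|\,s+|a_{j}-b_{j}|\,t\le 2\delta$ are exactly what is needed. Your handling of the degenerate $st=0$ case is also fine, since the root forces the corresponding index to be $0$ and $a_{0}=b_{0}=1$.

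As for the comparison: the present paper does not actually supply a proof of this proposition but simply cites it as \cite[Proposition~2.3]{ishiki2021continua}. Your argument is the natural direct computation one would expect to find there, so there is no meaningful methodological difference to report.
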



\subsection{Amalgamation of metrics}

The following proposition shows a way of constructing 
the wedge sum of metric spaces. 
The statement (\ref{item:metric1})  is deduced from \cite[Proposition 3.2]{Ishiki2020int}.
The statement (\ref{item:metric3}) follows from \cite[Proposition 2.6]{memoli2021characterization} and 
the definition of metric trees. 
\begin{prop}\label{prop:amal1}
Let $k\in \zz_{\ge 2}$. 
Let $\{(X_{i}, d_{i})\}_{i=1}^{k}$ be a sequence of  metric spaces. 
Assume that there exists a point $p$ such that  
$X_{i}\cap X_{j}=\{p\}$ for all distinct $i, j\in \{1, \dots, k\}$. 
We define 
a symmetric function 
$h: \lbra\bigcup_{i=1}^{k}X_{i}\rbra^2\to [0, \infty)$ 
by
\begin{align*}
	h(x, y)=
		\begin{cases}
		d_{i}(x, y) & \text{if $x, y\in X_{i}$;}\\
		d_{i}(x, p)+d_{j}(p, y) & \text{if $(x, y)\in X_{i}\times X_{j}$ and $i\neq j$. }
		\end{cases}
\end{align*}
Then, the following statements hold  true. 
\begin{enumerate}
\item 
The function $h$ is a metric with
$h|_{X_{i}^2}=d_{i}$ for all $i\in \{1, \dots, k\}$. \label{item:metric1}
\item 
If each $(X_{i}, d_{i})$ is a
geodesic  metric space (resp.~metric tree), 
then so is $\lbra \bigcup_{i=1}^{k}X_{i}, h\rbra$. \label{item:metric3}
\end{enumerate}
\end{prop}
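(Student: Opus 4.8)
The plan is to prove both statements directly from the definition of $h$, the only genuinely substantial points being the triangle inequality and, for metric trees, the four point condition. For statement (\ref{item:metric1}) I would first check that $h$ is well defined, since the two clauses can overlap only when one argument equals $p$: if $x=p$ and $y\in X_{j}$, then the clause "$x,y\in X_{j}$" and any clause "$x\in X_{i}$, $y\in X_{j}$ with $i\neq j$" both return $d_{j}(p,y)$, because $d_{i}(p,p)=0$. Symmetry and nonnegativity are immediate, and $h(x,y)=0$ forces $x=y$ (inside a common $X_{i}$ this is the corresponding property of $d_{i}$; for $x\in X_{i}$, $y\in X_{j}$ with $i\neq j$ the value $d_{i}(x,p)+d_{j}(p,y)$ vanishes only at $x=y=p$). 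The remaining axiom $h(x,z)\le h(x,y)+h(y,z)$ I would establish by a short case analysis on the distribution of $x,y,z$ among the $X_{i}$: in each case it collapses to a triangle inequality inside a single component after discarding a nonnegative term $2d_{j}(y,p)$ that appears exactly when $y$ lies in a third component. For example, if $x,z\in X_{i}$ and $y\in X_{j}$ with $j\neq i$, then $h(x,y)+h(y,z)=d_{i}(x,p)+d_{i}(p,z)+2d_{j}(y,p)\ge d_{i}(x,p)+d_{i}(p,z)\ge d_{i}(x,z)=h(x,z)$.

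For statement (\ref{item:metric3}), suppose first that each $(X_{i},d_{i})$ is geodesic and fix $x,y$. If $x,y$ lie in a common $X_{i}$, the geodesic segment joining them in $X_{i}$ serves, since $h$ restricts to $d_{i}$ there by (\ref{item:metric1}). If $x\in X_{i}$ and $y\in X_{j}$ with $i\neq j$, I would concatenate a geodesic from $x$ to $p$ in $X_{i}$ with one from $p$ to $y$ in $X_{j}$, parametrised on $[0,d_{i}(x,p)+d_{j}(p,y)]$. That this is an isometric embedding follows from the identity $h(u,v)=d_{i}(u,p)+d_{j}(p,v)$ for $u\in X_{i}$, $v\in X_{j}$, together with the fact that on each half the parameter measures distance to $p$, so the cross term telescopes to the difference of parameters.

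Now suppose each $(X_{i},d_{i})$ is a metric tree. By Proposition \ref{prop:0hyp} it suffices to see that $\lbra\bigcup_{i}X_{i},h\rbra$ is connected and $0$-hyperbolic. Connectedness is clear, as each $X_{i}$ is connected and they all contain $p$. The decisive structural feature is that cross component distances factor through $p$; consequently, for four points $a,b,c,d$ I would verify the four point condition in its equivalent form, that the maximum of the three pairing sums $h(a,b)+h(c,d)$, $h(a,c)+h(b,d)$, $h(a,d)+h(b,c)$ is attained at least twice, organised by the number of components occupied by the four points. When all four lie in one $X_{i}$ this is the $0$-hyperbolicity of $X_{i}$; when three lie in one $X_{i}$ and one outside, subtracting the common contribution of the outside point turns the three sums into the three pairing sums of the three points together with $p$ inside $X_{i}$, so it again reduces to $0$-hyperbolicity of $X_{i}$; and in the remaining spread out configurations two of the three sums become literally equal, so only triangle inequalities inside the components are needed.

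I expect the real work to be the bookkeeping of this case analysis rather than any single inequality: one must track which clause of $h$ governs each pair and, in particular, handle the fact that $p$ belongs to every $X_{i}$, so a point coinciding with $p$ must be assigned a component consistently in each pairwise evaluation (these are harmless limiting instances of the generic cases). The same care is the crux of the geodesic step, where the point is to confirm that the concatenated path is genuinely an isometric embedding of an interval, that is, a geodesic segment in the sense of the paper, and not merely a rectifiable path of the correct length.
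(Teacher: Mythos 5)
Your proposal is correct, but it takes a genuinely different route from the paper: the paper offers no direct argument at all, instead deducing statement (1) from \cite[Proposition 3.2]{Ishiki2020int} and statement (2) from \cite[Proposition 2.6]{memoli2021characterization} together with the definition of metric trees. Your argument is self-contained, and its key steps are all sound: well-definedness at the wedge point, the triangle inequality by discarding the term $2d_{j}(y,p)$ when $y$ sits in a third component, the concatenation through $p$ (whose isometric-embedding property follows from the telescoping identity $h(u,v)=d_{i}(u,p)+d_{j}(p,v)$), and, for trees, the reduction via Proposition \ref{prop:0hyp} to connectedness plus the four point condition in its ``largest two pairing sums coincide'' form. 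Your case analysis there is right: with three points in one $X_{i}$, subtracting the common contribution of the outside point reduces the claim to the four point condition for those three points and $p$ inside $X_{i}$; in the more spread-out configurations two of the pairing sums are literally equal because every cross-component distance factors through $p$, leaving only intra-component triangle inequalities. What the paper's citations buy is brevity; what your argument buys is transparency about the structural reason the wedge sum works. One remark: your use of Proposition \ref{prop:0hyp} in the tree case quietly invokes its nontrivial direction (connected and $0$-hyperbolic implies geodesic); this is legitimate since the paper supplies that proposition, but you could avoid leaning on it by noting that the wedge of trees is geodesic by your own concatenation argument, so that only the four point condition needs checking.
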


To prove our theorem, we need an
operation of replacing  edges of a metric tree 
by 
other 
metric trees. 
\begin{df}\label{df:replace}
Let $(X, d)$ be a metric tree, and 
$\{a_{\soeji}\}_{\soeji\in I}$ 
and 
$\{b_{\soeji}\}_{\soeji\in I}$ be 
families  of points in $X$ such that 
$[a_{\soeji}, b_{\soeji}]\cap 
[a_{\soeji'}, b_{\soeji'}]$ 
contains only at most one point for all distinct 
$\soeji, \soeji'\in I$. 
Let $\{(T_{\soeji}, e_{\soeji}, \alpha_{\soeji}, \beta_{\soeji})\}_{\soeji\in I}$ be a family of 
quadruple of metric trees 
$(T_{\soeji}, e_{\soeji})$ and  two specified points 
$\alpha_{\soeji}, \beta_{\soeji}\in T_{\soeji}$ such that 
$e_{\soeji}(\alpha_{\soeji}, \beta_{\soeji})=d(a_{\soeji}, b_{\soeji})$. 
Now 
we remove the sets $[a_{\soeji}, b_{\soeji}]^{\zantei}$ from $X$,  and 
identify 
$a_{\soeji}, b_{\soeji}$ 
with $\alpha_{\soeji}, \beta_{\soeji}$, respectively,  and 
consider that 
$X\cap T_{\soeji}=\{a_{\soeji}, b_{\soeji}\}$. 
Let $Y$ denote the resulting set. 
For $x\in Y$, 
let $E(x)=\{a_{\soeji}, b_{\soeji}\}$ and $h_{x}=e_{\soeji}$ if 
$x\in [a_{\soeji}, b_{\soeji}]$; otherwise, $E(x)=\{x\}$ and $h_{x}=d$. 
For each $x, y \in Y$, 
we define $u_{(x, y)}\in E(x)$ and $v_{(x, y)}\in E(y)$ by the points such that $d\lbra u_{(x, y)}, v_{(x, y)}\rbra$ is equal to  the distance between 
the sets 
$E(x)$ and $E(y)$. 
Note that the points $u_{(x, y)}$ and $v_{(x, y)}$ 
uniquely exist and $u_{(x, y)}=v_{(y, x)}$ 
and $v_{(x, y)}=u_{(y, x)}$. 
We define a symmetric function $D$ on $Y^{2}$ by 
$D(x, y)=h_{x}(x, u_{(x, y)})+d(u_{(x, y)}, v_{(x, y)})+
h_{y}(v_{(x, y)}, y)$. Then $D$ is a metric and the space 
$(Y, D)$ is a metric tree. 
We call this space a \emph{metric tree induced from $(X, d)$ replaced by 
$\{(T_{\soeji}, e_{\soeji}, \alpha_{\soeji}, \beta_{\soeji})\}_{\soeji\in I}$  with respect to 
$\{a_{\soeji}\}_{\soeji\in I}$ and 
$\{b_{\soeji}\}_{\soeji\in I}$}. 
Note that since $[a_{\soeji}, b_{\soeji}]$ is isometric to 
$[\alpha_{\soeji}, \beta_{\soeji}]$, 
the space $(Y, D)$ contains the original 
metric tree  $(X, d)$ as a metric subspace. 
\end{df}
\begin{prop}\label{prop:replacetreeconti}
Let $(X, d)$ be a metric tree. 
Let $\{a_{\soeji}\}_{\soeji\in I}$ and 
$\{b_{\soeji}\}_{\soeji\in I}$ be 
points stated in Corollary \ref{cor:decom}. 
Put $M_{\soeji}=d(a_{\soeji}, b_{\soeji})$. 
For each $s\in \intvl$, 
let $(Y(s), D[s])$ be the 
metric tree induced from $(X, d)$ replaced by 
$\{(\tbranch(s), M_{\soeji}\cdot w[s], 0_{0}, 1_{0})\}_{\soeji\in I}$ 
with respect to 
$\{a_{\soeji}\}_{\soeji\in I}$ and $\{b_{\soeji}\}_{\soeji\in I}$. 
Then the following statements hold true. 
\begin{enumerate}
\item 
The space $(Y(0), D[0])$ is isometric to $(X, d)$. 
\item 
For all $s\in \intvl$, we have 
$\lim_{t\to s}\grdis((Y(s), D[s]), (Y(t), D[t]))=0$. 
\end{enumerate}
\end{prop}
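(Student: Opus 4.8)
The plan is to settle statement (1) by exhibiting an explicit isometry and statement (2) by realising both $(Y(s), D[s])$ and $(Y(t), D[t])$ inside a single metric tree and bounding their Hausdorff distance there.

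\emph{Statement (1).} Since the tooth heights of $\tbranch(0)$ are $0\cdot c_{n}(0)=0$, the space $\tbranch(0)$ is just the base $\intvl\times\{0\}$, so $(\tbranch(0), M_{\soeji}\cdot w[0])$ is isometric to the interval $[0, M_{\soeji}]$ by an isometry sending $0_{0}, 1_{0}$ to the two endpoints. As $M_{\soeji}=d(a_{\soeji}, b_{\soeji})$ is exactly the length of the geodesic segment $[a_{\soeji}, b_{\soeji}]$, replacing each $[a_{\soeji}, b_{\soeji}]$ by $(\tbranch(0), M_{\soeji}\cdot w[0])$ is an endpoint-preserving isometry on every edge; hence the replacement changes nothing up to isometry and $(Y(0), D[0])$ is isometric to $(X, d)$.

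\emph{Statement (2).} Fix $s, t\in\intvl$. Both $\tbranch(s)$ and $\tbranch(t)$ are connected subsets of the comb $(\intvl\times\intvl, w)$ containing the base, so by Lemma \ref{lem:pathcontain} their union $\tbranch(s)\cup\tbranch(t)$ is again a (compact) metric tree whose metric is the restriction of $w$, and it contains $0_{0}, 1_{0}$ at distance $1$. Let $(Z, D_{Z})$ be the metric tree induced from $(X, d)$ by replacing each edge $[a_{\soeji}, b_{\soeji}]$ with $(\tbranch(s)\cup\tbranch(t), M_{\soeji}\cdot w, 0_{0}, 1_{0})$ as in Definition \ref{df:replace}. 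I claim that $(Y(s), D[s])$ and $(Y(t), D[t])$ are both metric subspaces of $(Z, D_{Z})$, sharing the same skeleton $X\setminus\bigcup_{\soeji}[a_{\soeji}, b_{\soeji}]^{\zantei}$ and the same endpoints. Indeed, the replacement distance $D_{Z}(x, y)=h_{x}(x, u)+d(u, v)+h_{y}(v, y)$ depends only on the endpoints $u, v$ of the edges containing $x, y$, on the skeleton distance $d(u, v)$, and on the within-comb distances $h_{x}, h_{y}$; and since $w[s]$ and $w[t]$ are literally restrictions of $w$, these within-comb distances agree with those computed in $\tbranch(s)\cup\tbranch(t)$. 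Checking this compatibility of the three glued metrics is the one delicate point, and I expect it to be the main obstacle; it rests on the fact that in a tree the geodesic between points lying in different combs must pass through the skeleton, so the teeth play no role in inter-comb distances.

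Granting the embeddings, $\grdis((Y(s), D[s]), (Y(t), D[t]))\le\hdis(Y(s), Y(t); Z, D_{Z})$ by the definition of the Gromov--Hausdorff distance. A point of $Y(s)$ lying in the base or skeleton already belongs to $Y(t)$; a point in a tooth of the copy of $\tbranch(s)$ in some edge $\soeji$ has, by Lemma \ref{lem:treeconti}, a witness in the copy of $\tbranch(t)$ in the same edge at $D_{Z}$-distance at most $M_{\soeji}\cdot\hdis(\tbranch(s), \tbranch(t); \intvl\times\intvl, w)$, because both points lie in the same comb, where $D_{Z}$ agrees with $M_{\soeji}\cdot w$. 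By Corollary \ref{cor:decom} we have $M_{\soeji}=\diam_{d}([a_{\soeji}, b_{\soeji}])\le 1$, so the same estimate holds symmetrically and uniformly in $\soeji$, giving $\hdis(Y(s), Y(t); Z, D_{Z})\le\hdis(\tbranch(s), \tbranch(t); \intvl\times\intvl, w)$. Finally this last quantity tends to $0$ as $t\to s$: for $s=0$ it is at most $t$, for $s\in(0, 1)$ it is at most $\max_{0\le i\le n+1}|s\, c_{i}(s)-t\, c_{i}(t)|\to 0$ by continuity of the finitely many $c_{i}$, and for the boundary value $s=1$ (outside the range of Lemma \ref{lem:treeconti}) one argues directly that $\tbranch(1)$ is the base and the maximal tooth height $\max_{n}t\, c_{n}(t)=t\, c_{0}(t)\to 0$. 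This yields $\lim_{t\to s}\grdis((Y(s), D[s]), (Y(t), D[t]))=0$.
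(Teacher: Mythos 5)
Your proposal is correct and takes essentially the same approach as the paper: the paper's own proof consists only of the remark that statement (1) follows from $(\tbranch(0), w[0])$ being isometric to $\intvl$ and statement (2) from Lemma \ref{lem:treeconti} together with $M_{\soeji}\le 1$, and your argument is the natural fleshing-out of exactly that sketch---realizing $(Y(s), D[s])$ and $(Y(t), D[t])$ inside a common ambient tree built by replacing each edge with $\tbranch(s)\cup\tbranch(t)$, and transferring the Hausdorff bound of Lemma \ref{lem:treeconti} through the scaling $M_{\soeji}\le 1$. A minor bonus of your write-up is that it also treats the endpoint $s=1$, which lies outside the stated range of Lemma \ref{lem:treeconti} and is passed over silently in the paper's proof.
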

\begin{proof}
Since $(\tbranch(0), w[0])$ is isometric to $\intvl$, 
the statement (1) holds true. 
The statement (2) follows from Lemma \ref{lem:treeconti} and $M_{\soeji}\le 1$. 
\end{proof}


\subsection{Topological embeddings}
For a metric space $(X, d)$, 
$o\in X$, and $r\in [0, \infty]$,  we denote by 
$B(o, r)$
the set of all $x\in X$ with $d(o, x)\le r$. 
Note that  $B(x, 0)=\{x\}$ and $B(x, \infty)=X$. 
\begin{lem}\label{lem:ballsconti}
Let $(X, d)$ be a geodesic space. 
Let $o\in X$. Then, for all $r, r'\in [0, \infty)$, we have 
$\hdis(B(o, r), B(o, r'); X, d)\le |r-r'|$. 
\end{lem}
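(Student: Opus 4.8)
The plan is to bound the two one-sided suprema that together define the Hausdorff distance. Recall that $\hdis(A, B; X, d)$ equals the infimum of all $\varepsilon \ge 0$ such that $A$ is contained in the closed $\varepsilon$-neighborhood of $B$ and $B$ is contained in the closed $\varepsilon$-neighborhood of $A$. By the symmetry of the statement in $r$ and $r'$, I may assume $r \le r'$, so that $|r - r'| = r' - r$ and $B(o, r) \subseteq B(o, r')$.

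Since $B(o, r) \subseteq B(o, r')$, every point of $B(o, r)$ already lies in $B(o, r')$, hence within distance $r' - r$ of it; this settles one of the two required containments for free. It therefore remains to show that every $x \in B(o, r')$ lies within distance $r' - r$ of $B(o, r)$.

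Fix $x \in B(o, r')$. If $d(o, x) \le r$, then $x \in B(o, r)$ and there is nothing to prove. Otherwise $r < d(o, x) \le r'$, and here I would invoke the geodesic hypothesis: choose a geodesic segment connecting $o$ and $x$, realized by an isometric embedding $f \colon [0, d(o, x)] \to X$ with $f(0) = o$ and $f(d(o, x)) = x$. Set $y = f(r)$, which is legitimate because the case distinction guarantees $r \in [0, d(o, x)]$. Since $f$ is isometric, $d(o, y) = r$, so $y \in B(o, r)$, while $d(x, y) = d(o, x) - r \le r' - r$. Thus $x$ lies within distance $r' - r = |r - r'|$ of $B(o, r)$, which completes the estimate.

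The argument is essentially immediate once geodesics are available, so I do not anticipate a genuine obstacle; the only point demanding any care is the explicit distance computation along the geodesic. This rests on the fact that the defining map $f$ is an \emph{isometric} embedding of a real interval, so that distances between $o$, $x$, and the intermediate point $f(r)$ are computed exactly as in $\rr$. The preliminary case $d(o, x) \le r$ is precisely what ensures that the chosen parameter value $r$ lies in the domain of $f$.
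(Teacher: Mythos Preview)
Your argument is correct and is the standard one: reduce by symmetry to $r\le r'$, use the containment $B(o,r)\subset B(o,r')$ for one side, and for the other side push each $x\in B(o,r')$ with $d(o,x)>r$ back along a geodesic to $o$ to hit the sphere of radius $r$. The paper states this lemma without proof, presumably regarding it as elementary; your write-up supplies exactly the expected justification.
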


For every $n\in \zz_{\ge 1}$, 
 we denote by $\widehat{n}$
the set $\{1, \dots, n\}$. 
In what follows, we consider that 
the set $\widehat{n}$ 
is  equipped with the discrete topology. 

The following proposition has an  essential  role in 
the proof of Theorem \ref{thm:treeemb}. 
Using this proposition, 
Theorem \ref{thm:treeemb} can be proven by 
an elementary argument such as the pigeonhole principle. 
Similar propositions are shown  in 
\cite[Proposition 4.4]{Ishiki2021fractal} 
and 
\cite[Propositions 3.6 and 4.2]{ishiki2021continua}, 
which  proofs  are based on the direct sum and 
direct product of metric spaces, respectively. 
Unlike these propositions, the following is based on 
the wedge sum of metric spaces discussed in Proposition \ref{prop:amal1}. 
\begin{prop}\label{prop:embednm}
Let $n\in \zz_{\ge 1}$ and $m\in \zz_{\ge 2}$. 
Let $H$ be a compact metrizable spaces, 
and $\{v_{i}\}_{i=1}^{n+1}$ be 
$n+1$ different points in $H$. 
Put $\deltasecond=H\setminus \{\, v_{i}\mid i=1, \dots, n+1\, \}$. 
Let $\{(X_{i}, d_{i})\}_{i=1}^{n+1}$ be a 
sequence of compact metric spaces in $\treeset$ satisfying  that 
$\grdis((X_{i}, d_{i}), (X_{j}, d_{j}))>0$ for all distinct $i, j$. 
Then there exists a continuous map 
$F: \hcube \times \widehat{m}\to 
\treeset$
such that 
\begin{enumerate}
\item 
for all $i\in \widehat{n+1}$ and $k\in \widehat{m}$ we have 
$F(v_{i}, k)=(X_{i}, d_{i})$;
\item 
for all $(u, k), (u', k')\in \deltasecond\times \widehat{m}$ with $(u, k)\neq (u', k')$, we have 
$F(u, k)\neq F(u', k')$. 
\end{enumerate}
\end{prop}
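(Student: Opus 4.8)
The plan is to realize $F(u,k)$ as a wedge sum, in the sense of Proposition~\ref{prop:amal1}, of the prescribed trees (rescaled so that they vanish away from their marked point) together with one \emph{encoding} star tree from Definition~\ref{df:tree} whose parameter records the pair $(u,k)$. Since $H\times\widehat{m}$ is compact metrizable, I fix a topological embedding $\Psi\colon H\times\widehat{m}\to\qqcube$ into the Hilbert cube; by Proposition~\ref{prop:qqconti} the assignment $(u,k)\mapsto(\ctree,\disdis[\Psi(u,k)])$ is then $\grdis$-continuous, and by Proposition~\ref{prop:qqinj} distinct values of $\Psi$ yield non-isometric star trees. I also fix a compatible metric $d_{H}$ on $H$ and continuous functions $\eta_{1},\dots,\eta_{n+1}\colon H\to[0,1]$ with $\eta_{i}(v_{j})=\delta_{ij}$ (by normality of $H$), together with a continuous $\kappa\colon H\to[0,1]$ having $\kappa^{-1}(0)=\{v_{1},\dots,v_{n+1}\}$, for instance $\kappa(u)=\min\{1,\min_{i}d_{H}(u,v_{i})\}$.

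I then set $F(u,k)$ to be the wedge sum, glued at one base point $p$, of the rescaled trees $(X_{i},\eta_{i}(u)\,d_{i})$ for $i\in\widehat{n+1}$ (each attached at a fixed marked point $o_{i}\in X_{i}$) and of the rescaled star tree $(\ctree,\kappa(u)\,\disdis[\Psi(u,k)])$ (attached at its centre $0_{0}$). By Proposition~\ref{prop:amal1}\,(\ref{item:metric3}) this wedge is again a compact metric tree, so $F(u,k)\in\treeset$. Condition~(1) is then immediate: at $u=v_{i}$ every factor except $(X_{i},d_{i})$ is rescaled by $0$ and collapses to $p$, so $F(v_{i},k)$ is isometric to $(X_{i},d_{i})$ for every $k$. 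For continuity I would keep the underlying set fixed and estimate the wedge metrics directly from the formula in Proposition~\ref{prop:amal1}: continuous rescaling of a fixed compact space is $\grdis$-continuous, Proposition~\ref{prop:qqconti} controls the star factor uniformly (the metrics $\disdis[\cdot]$ being bounded, even as $\kappa\to0$), and a routine bound shows the wedge metric varies uniformly with its factor metrics; combining these gives continuity of $F$.

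The genuinely delicate point, and the one I expect to be the main obstacle, is the injectivity~(2). Here Proposition~\ref{prop:qqinj} performs the final bookkeeping: since an isometry is a rescaling by $1$, it suffices to prove that if $F(u,k)$ and $F(u',k')$ are isometric (with $u,u'\in\deltasecond$, so both star factors are non-degenerate), then their \emph{star factors} are isometric; indeed Proposition~\ref{prop:qqinj} then forces $\Psi(u,k)=\Psi(u',k')$, whence $(u,k)=(u',k')$ by injectivity of $\Psi$. Thus the whole problem reduces to recovering the isometry type of the encoding star tree from that of the composite.

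To extract the star factor I would exploit its structure at the wedge point. The centre $0_{0}$ carries infinitely many rays whose lengths $\kappa(u)a_{j}$ lie in the prescribed dyadic windows $[\kappa(u)2^{-2j},\kappa(u)2^{-2j+1}]$ and accumulate only at $0$, so $p$ is a limit of leaves along arcs whose length spectrum follows this rigid dyadic pattern. By attaching each $X_{i}$ at a carefully chosen point $o_{i}$ (creating only finitely many extra directions at $p$, along which the tree immediately branches or terminates differently from a clean star ray), I aim to characterise the star rays purely isometrically as the maximal unbranched arcs issuing from $p$ whose lengths realise the dyadic spectrum, and thereby to reconstruct $(\ctree,\kappa(u)\disdis[\Psi(u,k)])$ up to isometry. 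Separating these star rays from the boundedly many contributions of the prescribed trees—whose scales are not uniformly comparable to $\kappa(u)$ as $u$ ranges over $H$—is the crux; I expect to handle it by a local scale argument near each $v_{i}$ (where $\kappa(u)\to0$ while $(X_{i},d_{i})$ stays macroscopic) combined with the global rigidity supplied by Proposition~\ref{prop:qqinj}.
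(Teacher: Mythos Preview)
Your overall architecture---embed $H\times\widehat{m}$ into $\qqcube$, wedge (Proposition~\ref{prop:amal1}) shrunken copies of the $X_i$ with an encoding star $(\ctree,\kappa(u)\disdis[\Psi(u,k)])$, and finish with Proposition~\ref{prop:qqinj}---is exactly the paper's. Continuity and condition~(1) go through essentially as you say. The gap is precisely where you flag it, and your proposed fix does not close it.

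The difficulty is that the $X_i$ are \emph{arbitrary} compact metric trees. Any $X_i$ may itself contain arbitrarily long unbranched arcs, or be a star, or have a point of infinite degree whose incident edge-lengths follow a dyadic pattern. No choice of attaching point $o_i$ can prevent such structure from appearing in the wedge, and once it does there is no isometry invariant that singles out the encoding star's rays. Your ``local scale'' idea does not help either: you need injectivity for every $u\in\deltasecond$, and for generic $u$ the factors $\eta_i(u)$ and $\kappa(u)$ are of comparable size, so there is no scale separation to exploit.

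The paper supplies the missing mechanism. Before wedging, each $X_i$ is first \emph{combed}: using Corollary~\ref{cor:decom} and Definition~\ref{df:replace}, every maximal unbranched arc of $X_i$ is replaced by a scaled copy of the comb $\tbranch(\varphi(u))$, producing trees $Y_i(\varphi(u))$ (Proposition~\ref{prop:replacetreeconti}). Lemma~\ref{lem:horizonlength} then guarantees that every connected component of $\ninnin{\,\cdot\,}$ contributed by the $X_i$-pieces has diameter strictly below $2^{-a}$, where $2^{-(a+1)}\le\varphi(u)<2^{-a}$. The encoding star is scaled by $\xi(u)=32\,\varphi(u)$, so its two longest arms $[0_0,1_0]$ and $[0_0,1_1]$ have length at least $2^{-4}\xi(u)\ge 2^{-a}$; they are therefore the unique longest and second-longest maximal unbranched arcs in the whole wedge, which pins down $0_0$ as their common endpoint and then forces $f(\ctree)=\ctree$. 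Two further points: the paper attaches the star at the leaf $1_0$ rather than at its centre $0_0$, so that $0_0$ survives as a recognisable interior branch point; and it shrinks the $X_i$-pieces by taking balls of radius $\sigma_i(u)$ (Lemma~\ref{lem:ballsconti}) rather than by rescaling, though for continuity and condition~(1) your rescaling would serve just as well.
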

\begin{proof}
In what follows, 
we consider that the set $[0, \infty]$ is equipped with 
the canonical  topology homeomorphic to $[0, 1]$. 
Since every metrizable space is perfectly normal, 
and since $[0, \infty]$ is homeomorphic to $[0, 1]$, 
for each $i\in \widehat{n+1}$
we can  take a continuous function 
$\sigma_{i}:H\to [0, \infty]$ such that 
$\sigma_{i}^{-1}(0)=\{\, v_{j}\mid j\neq i\, \}$ and
$\sigma_{i}^{-1}(\infty)=\{v_{i}\}$. 
We can also take a continuous function 
$\varphi:H\to [0, 1/2]$ with 
$\varphi^{-1}(0)=\lbran\, v_{i}\mid i=1, \dots, n+1\,  \rbran$. 
We put $\xi(u)=32\cdot \varphi(u)$. 
Since $H\times \widehat{m}$ is compact and metrizable, 
there exists a topological embedding 
$\topemb:H\times  \widehat{m}\to \qqcube$ 
(this is the Urysohn metrization  theorem, see \cite{Kelly1975}).

For each $i\in \widehat{n+1}$, 
let $\{a_{i, \soeji}\}_{\soeji\in I}$ and $\{b_{i, \soeji}\}_{\soeji\in I}$ 
be points in $(X_{i}, d_{i})$ stated in Corollary \ref{cor:decom}. 
Put $M_{i, \soeji}=d(a_{i, \soeji}, b_{i, \soeji})$. 
Then,  we have $M_{i, \soeji}\le 1$.
For each $s\in \intvl$, 
we denote by $(Y_{i}(s), D_{i}[s])$ the 
 metric tree induced from $(X_{i}, d_{i})$ replaced by 
$\{(\tbranch(s), M_{i, \soeji}\cdot w[s], 0_{0}, 1_{0})\}_{\soeji\in I}$ 
 with respect to 
$\{a_{i, \soeji}\}_{\soeji\in I}$ and $\{b_{i, \soeji}\}_{\soeji\in I}$. 

For each $i\in \widehat{n+1}$,  
we take $p_{i}\in X_{i}$. 
For each $(u, k)\in H\times \widehat{m}$, 
we denote by 
$Z_{i}(u, k)$ the set of all $x\in Y_{i}(\varphi(u))$
with $D_{i}[\varphi(u)](x, p_{i})\le \sigma_{i}(u)$. 
Let $E_{i}[u, k]$ denote the 
restricted metric of $D_{i}[\varphi(u)]$ on $Z_{i}(u, k)$. 
Put 
$(Z_{n+2}(u, k), E_{n+2}[u, k])=
(\ctree, \xi(u)\cdot \disdis[\rho(u, k)])$ and 
$p_{n+2}=1_{0}\in \ctree$. 

We identify the $n+2$ many points 
$\{\, p_{i}\mid i=1, \dots n+2\, \}$
as a single point, say $p$, 
and we consider that 
$Z_{i}(u, k)\cap Z_{i'}(u, k)=\{p\}$ for all distinct 
$i, i'\in \widehat{n+1}$. 
We put 
$W(u, k)=\bigcup_{i=1}^{n+2}Z_{i}(u, k)$.
Applying  Proposition  \ref{prop:amal1}, we obtain a
metric 
$g[u, k]$ on $W(u, k)$ such that 
$g[u, k]|_{Z_{i}(u, k)^{2}}=E_{i}[u, k]$.  
Namely, the space $(W(u, k), g[u, k])$ is the 
wedge sum of the spaces $\{(Z_{i}(u, k), E[u, k])\}_{i=1}^{n+2}$ with respect to the points $\{\, p_{i}\mid i=1, \dots n+2\, \}$. 

By (2) in Proposition  \ref{prop:amal1}, we see that 
$(W(u, k), g[u, k])$ 
is a metric tree for all $(u, k)\in H\times \widehat{m}$.
By (1) in Proposition \ref{prop:replacetreeconti}, 
note  that  $(W(v_{i}, k), g[v_{i}, k])$  is isometric to 
$(X_{i}, d_{i})$ for all $i\in \widehat{n+1}$ and 
$k\in \widehat{m}$. 
We define  
$F: H\times \widehat{m}\to \treeset$ 
 by 
\[
F(u, k)=
\begin{cases}
(X_{i}, d_{i}) & \text{if $u=v_{i}$ for some $i\in \widehat{n+1}$;}\\
\left(W(u, k), g[u, k]\right) & \text{otherwise.}
\end{cases}
\]
By (2) in Proposition \ref{prop:replacetreeconti},  and
Proposition \ref{prop:qqconti} and 
Lemma \ref{lem:ballsconti}, and the continuity of each $\sigma_{i}$,  
the map  $F$ is continuous. 
By the definition, the condition (1) is satisfied.

To prove the condition (2) in the proposition, 
we assume that there exists an isometry 
$f: (W(u, k), g[u, k])\to (W(u', k'), g[u', k'])$. 

We first  show that $f(\ctree)=\ctree$. 
Fix arbitrary $(v, l)\in \deltasecond\times \widehat{m}$. 
Let $\mathcal{P}(v, l)$ be the set of all connected components of 
$\ninnin{W(v, l)}$. 
Take $a\in \zz_{\ge 0}$ with 
$2^{-(a+1)}\le \varphi(v)<2^{-a}$. 
Let $C$ be a connected component of 
$\ninnin{\bigcup_{i=1}^{n+1}Z_{i}(v, l)}$. 
Then by Lemma \ref{lem:horizonlength}, and 
by $M_{i, \soeji}\le 1$,  
 we have 
$\diam_{g[v, l]}(C)<2^{-a}$. 
Since $2^{-a}\le 2\varphi(v)=2^{-4}\xi(v)$, 
we obtain $\diam_{g[v, l]}(C)<2^{-4}\xi(v)$. 
Since $2^{-4}\le \disdis[\sqsq{a}](0_{0}, 1_{1})$ for all 
$\sqsq{a}\in \qqcube$, we have 
$2^{-4}\xi(v)\le g[v, l](0_{0}, 1_{1})$. 
By the definitions of $\ctree$ and $g$, 
we have 
$g[v, l](0_{0}, 1_{i+1})<g[v, l](0_{0}, 1_{i})$ for all $i\in \zz_{\ge 0}$. 
Thus, 
we conclude that 
the subset 
$[0_{0}, 1_{0}]^{\zantei}\cup\{1_{0}\}$ of $\ctree$ is the unique set possessing the maximal 
diameter of  elements in 
$\mathcal{P}(v, l)$, 
and the subset $[0_{0}, 1_{1}]^{\zantei}\cup\{1_{1}\}$ of 
$\ctree$ is the unique set possessing 
the second maximal diameter of elements in 
$\mathcal{P}(v,  l)$. 
Putting $(v, l)=(u, k), (u', k')$, 
since $f$ is an isometry, 
by the argument discussed above, 
we obtain
$f([0_{0}, 1_{0}])=[0_{0}, 1_{0}]$ 
and 
$f([0_{0}, 1_{1}])=
[0_{0}, 1_{1}]$. 
This implies that 
$f(0_{0})\in \{0_{0}, 1_{0}\}$ and 
$f(0_{0})\in \{0_{0}, 1_{1}\}$. 
Thus $f(0_{0})=0_{0}$,  and $f(1_{i})=1_{i}$
for all $i\in \{0, 1\}$. 

To prove $f(\ctree)=\ctree$, 
for the sake of contradiction, 
we suppose that 
there exists $x\in \ctree$ with 
$f(x)\not\in \ctree$. Take $q\in \zz_{\ge 0}$
such that $x\in [0_{0}, 1_{q}]$. 
Then, by the construction of $W(u, k)$, 
the segment $[0_{0}, f(x)]$ must contain $1_{0}$. 
Thus, 
$g[u, k](0_{0}, 1_{0})\le 
g[u, k](0_{0}, x)\le g[u, k](0_{0}, 1_{q})$. 
Since 
$g[u, k](0_{0}, 1_{i})<g[u, k](0_{0}, 1_{0})$ for all $i\neq 0$, we obtain $1_{q}=1_{0}=x$. 
This contradicts $f(1_{0})=1_{0}$. 
Therefore $f(\ctree)\subset \ctree$. 
By replacing the role of $f$ with $f^{-1}$, 
we conclude that $f(\ctree)=\ctree$. 

We now prove the condition (2). 
By the definition of $g$, and $f(\ctree)=\ctree$, 
the spaces $(\ctree, \xi(u)\cdot \disdis[\rho(u, k)])$ and 
$(\ctree, \xi(u')\cdot \disdis[\rho(u', k')])$ are isometric to each other. Then, 
by Proposition \ref{prop:qqinj}, 
we have  $\rho(u, k)=\rho(u', k')$, 
and hence $u=u'$ and $k=k'$. 
Therefore we obtain the condition (2). 
This finishes the proof of 
Proposition \ref{prop:embednm}. 
\end{proof}

\begin{proof}[Proof of Theorem \ref{thm:treeemb}]
The proof of Theorem \ref{thm:treeemb} is essentially the 
same  as \cite[Theorem 1.1]{ishiki2021continua}   
and 
\cite[Theorem 1.3]{Ishiki2021fractal}. 
Put $m=n+2$. 
Let 
$F: H\times \widehat{m}\to \treeset$ be a map stated in 
Proposition \ref{prop:embednm}. 
For the sake of contradiction, 
we suppose that 
for all  $k\in \widehat{m}$  we have 
$\{\, (X_{i}, d_{i})\mid i=1, \dots, n+1\, \}\cap 
F\left(\deltasecond\times \{k\}\right)
\neq 
\emptyset$. 
Then, by $m=n+2$, and by the pigeonhole principle, 
there exists two distinct $j, j'\in \widehat{m}$ such that 
$(X_{i}, d_{i})\in F\left(\deltasecond\times \{j\}\right)\cap F\left(\deltasecond\times \{j'\}\right)$ for some 
$i\in \widehat{n+1}$. 
This contradicts 
 the condition (2) in the Proposition \ref{prop:embednm}. 
 Thus,  there exists $k\in \widehat{m}$ such that 
$\{\, (X_{i}, d_{i})\mid i=1, \dots, n+1\, \}\cap 
F\left(\deltasecond\times \{k\}\right)
=\emptyset$. 
Therefore,  the function 
$\Phi:H\to \treeset$ defined by $\Phi(u)=F(u, k)$ is injective, and hence $\Phi$ is  a  topological embedding since $H$ is compact. 
This completes the proof of Theorem \ref{thm:treeemb}. 
\end{proof}
\subsection{Additional remark}\label{subsec:adrmk}
We denote by 
$\grsptwo$ the set of all proper metric spaces equipped with the pointed Gromov--Hausdorff distance
$\pgrdis$ 
(for the definition, see \cite{herron2016gromov} or \cite{ishiki2021continua}). 
Let $\treesettwo$ denote the set of all metric trees in  $\grsptwo$. 
By the same method as the proof of Theorem \ref{thm:treeemb},  
using \cite[Lemma 3.4]{herron2016gromov}
we obtain an analogue of Theorem \ref{thm:treeemb} for proper metric trees. 
We omit the proof of  the following. 
A similar theorem is proven in 
\cite[Theorem 1.3]{ishiki2021continua}, 
and we refer the readers to the proofs of 
\cite[Theorem 1.3]{ishiki2021continua} 
and Theorem \ref{thm:treeemb} in the present paper. 

\begin{thm}\label{thm:ptreeemb}
Let $n\in \zz_{\ge 1}$. 
Let $H$ be a compact metrizable space, 
and 
$\{v_{i}\}_{i=1}^{n+1}$ be $n+1$ different points  in 
$H$. 
Let $\{(X_{i}, d_{i}, a_{i})\}_{i=1}^{n+1}$ be a 
sequence  in $\treesettwo$ such that 
$\pgrdis((X_{i}, d_{i}, a_{i}), (X_{j}, d_{j}, a_{j}))>0$ for all  distinct $i, j$. 
Then, 
there exists a topological embedding 
$\Phi: H\to \treesettwo$ such that 
$\Phi(v_{i})=(X_{i}, d_{i}, a_{i})$. 
\end{thm}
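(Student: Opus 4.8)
The plan is to follow the proof of Theorem \ref{thm:treeemb} line by line, replacing the Gromov--Hausdorff distance $\grdis$ by the pointed Gromov--Hausdorff distance $\pgrdis$ throughout. First I would establish a pointed analogue of Proposition \ref{prop:embednm}: for $n\in \zz_{\ge 1}$, $m\in \zz_{\ge 2}$, a compact metrizable space $H$ with $n+1$ distinguished points $\{v_{i}\}_{i=1}^{n+1}$, and a sequence $\{(X_{i}, d_{i}, a_{i})\}_{i=1}^{n+1}$ in $\treesettwo$ with pairwise positive $\pgrdis$, there is a continuous map $F:H\times \widehat{m}\to \treesettwo$ with $F(v_{i}, k)=(X_{i}, d_{i}, a_{i})$ for all $i, k$, and with $F$ injective on $\deltasecond\times \widehat{m}$. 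Once this is in hand, the pigeonhole selection of a good index $k\in \widehat{m}$, and the conclusion that $\Phi(u)=F(u,k)$ is a topological embedding of the compact space $H$, are identical to the proof of Theorem \ref{thm:treeemb} and require no change.

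For the construction of $F$ I would reuse the auxiliary functions $\sigma_{i}, \varphi, \xi$ and the embedding $\topemb:H\times \widehat{m}\to \qqcube$ exactly as in the proof of Proposition \ref{prop:embednm}, together with the edge-replaced trees $(Y_{i}(\varphi(u)), D_{i}[\varphi(u)])$ built from $(X_{i}, d_{i})$ via Corollary \ref{cor:decom} and Proposition \ref{prop:replacetreeconti}. The only deliberate change is to fix the chosen basepoints as $p_{i}=a_{i}$. I would then set $Z_{i}(u,k)$ to be the closed ball $B(a_{i}, \sigma_{i}(u))$ in $Y_{i}(\varphi(u))$; since each $X_{i}$ is proper, so is $Y_{i}(\varphi(u))$, hence each such ball is compact. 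Adjoining the factor $(\ctree, \xi(u)\cdot \disdis[\topemb(u,k)])$ with $p_{n+2}=1_{0}$ and forming the wedge sum at the common point $p$ (the identification of $p_{1}, \dots, p_{n+2}$) via Proposition \ref{prop:amal1}, I obtain a proper metric tree $(W(u,k), g[u,k])$, which I equip with the basepoint $p$. I then define $F(u,k)=(W(u,k), g[u,k], p)$. At $u=v_{i}$ every factor except the $i$-th degenerates to the point $p$ (because $\sigma_{j}(v_{i})=0$ for $j\neq i$ and $\xi(v_{i})=0$), while $Z_{i}(v_{i},k)=B(a_{i},\infty)=Y_{i}(0)\cong X_{i}$; since $p=p_{i}=a_{i}$, the pointed space $F(v_{i},k)$ is isometric to $(X_{i}, d_{i}, a_{i})$, giving condition (1).

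The genuinely new point is the continuity of $F$ with respect to $\pgrdis$, and this is exactly where I would invoke \cite[Lemma 3.4]{herron2016gromov}: that lemma reduces smallness of $\pgrdis$ between two pointed proper spaces to the Hausdorff/Gromov--Hausdorff closeness of their closed balls $B(p, r)$ of each fixed finite radius $r$. For such $r$, the closeness of the balls follows from the same estimates used in the compact case, namely Proposition \ref{prop:replacetreeconti}, Proposition \ref{prop:qqconti}, Lemma \ref{lem:ballsconti}, and the continuity of $\sigma_{i}, \varphi, \topemb$. The one point demanding extra care is the limiting behaviour near $u=v_{i}$, where $\sigma_{i}(u)\to \infty$: here I would use that in a proper space the truncated balls $B(a_{i}, \sigma_{i}(u))$ exhaust $X_{i}$ and hence converge to $X_{i}$ in the pointed sense, which is precisely the content of the convergence criterion in \cite[Lemma 3.4]{herron2016gromov}. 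I expect this ``escape to infinity'' of the truncation radius to be the main obstacle, since it has no counterpart in the compact setting.

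Finally, for the injectivity on $\deltasecond\times \widehat{m}$ (condition (2)) I would repeat the argument of Proposition \ref{prop:embednm} verbatim. The recovery of the $\ctree$ factor rests only on intrinsic, isometry-invariant data: by Lemma \ref{lem:horizonlength} the segments $[0_{0}, 1_{0}]$ and $[0_{0}, 1_{1}]$ are the unique connected components of $\ninnin{W(u,k)}$ of, respectively, maximal and second-maximal diameter, so any isometry $f$ between two of the constructed spaces must fix them and satisfy $f(\ctree)=\ctree$. In the pointed setting $f$ is moreover required to carry basepoint to basepoint, i.e.\ $f(p)=p'$; since $p$ corresponds to $1_{0}\in \ctree$, this is consistent with, and mildly simplifies, the compact argument. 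Restricting $f$ to $\ctree$ and applying Proposition \ref{prop:qqinj} then forces $\topemb(u,k)=\topemb(u',k')$, whence $(u,k)=(u',k')$ because $\topemb$ is injective. This completes the pointed analogue of Proposition \ref{prop:embednm}, and the theorem follows by the pigeonhole argument as above.
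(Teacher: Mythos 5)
Your proposal follows exactly the route the paper indicates for Theorem \ref{thm:ptreeemb}: the paper omits the proof, saying only that it proceeds by the method of Theorem \ref{thm:treeemb} together with \cite[Lemma 3.4]{herron2016gromov}, and your reconstruction (a pointed analogue of Proposition \ref{prop:embednm} with $p_{i}=a_{i}$, the wedge point taken as basepoint, Herron's lemma handling continuity where $\sigma_{i}(u)\to\infty$, and the same pigeonhole selection of $k$) is precisely that method. The only step you assert without justification is that the edge-replaced trees $Y_{i}(\varphi(u))$ remain proper (so that the truncating balls are compact); this is true, but deserves a line of proof --- e.g.\ teeth of length at least $\epsilon$ can only be attached to edges of length at least $\epsilon$, whose midpoints are $\epsilon/2$-separated, so any bounded region contains only finitely many of them --- though this is at the same level of detail the paper itself omits.
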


\begin{cor}\label{cor:pontedtree}
The set $\treesettwo$
is   path-connected and  its  all  non-empty open 
subsets  have infinite topological dimension. 
\end{cor}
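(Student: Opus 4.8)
The plan is to deduce Corollary \ref{cor:pontedtree} from Theorem \ref{thm:ptreeemb} in exactly the same way that Corollary \ref{cor:treecor} is deduced from Theorem \ref{thm:treeemb}: I feed the embedding theorem two different choices of the compact metrizable space $H$, namely an interval to produce paths and a Hilbert cube to produce infinite-dimensional pieces. Since Theorem \ref{thm:ptreeemb} already does all the geometric work, the remainder is a purely formal argument.

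For path-connectedness I would take two arbitrary points $(X_{1}, d_{1}, a_{1})$ and $(X_{2}, d_{2}, a_{2})$ of $\treesettwo$. If they coincide there is nothing to prove, so I may assume $\pgrdis((X_{1}, d_{1}, a_{1}), (X_{2}, d_{2}, a_{2}))>0$. I then apply Theorem \ref{thm:ptreeemb} with $n=1$, $H=[0, 1]$, $v_{1}=0$, and $v_{2}=1$, obtaining a topological embedding $\Phi\colon [0, 1]\to \treesettwo$ with $\Phi(0)=(X_{1}, d_{1}, a_{1})$ and $\Phi(1)=(X_{2}, d_{2}, a_{2})$. This $\Phi$ is the desired path, so $\treesettwo$ is path-connected.

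For the dimension statement, let $U$ be a non-empty open subset of $\treesettwo$ and fix $(X_{1}, d_{1}, a_{1})\in U$. Since $\treesettwo$ is not a singleton, I can choose $(X_{2}, d_{2}, a_{2})\in \treesettwo$ with $\pgrdis((X_{1}, d_{1}, a_{1}), (X_{2}, d_{2}, a_{2}))>0$. Applying Theorem \ref{thm:ptreeemb} with $n=1$, $H=[0, 1]^{\aleph_{0}}$ the Hilbert cube, and $v_{1}, v_{2}$ two distinct points of $H$ yields a topological embedding $\Phi\colon H\to \treesettwo$ with $\Phi(v_{1})=(X_{1}, d_{1}, a_{1})\in U$. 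By continuity, $\Phi^{-1}(U)$ is an open neighborhood of $v_{1}$ in $H$, and every neighborhood of a point of the Hilbert cube contains a subspace homeomorphic to the Hilbert cube (a sufficiently small product of closed subintervals). Restricting $\Phi$ to such a subspace produces a topological embedding of the Hilbert cube into $U$. Since the covering dimension of a subspace of a metrizable space never exceeds that of the ambient space, and since the Hilbert cube has infinite topological dimension, I conclude that $U$ has infinite topological dimension.

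The main obstacle has already been discharged: it is Theorem \ref{thm:ptreeemb} itself, whose proof rests on the wedge-sum construction and the pointed analogue of Lemma \ref{lem:ballsconti} (via \cite{herron2016gromov}), and which the paper only indicates. Within the present deduction the sole nonformal ingredients are two standard facts from infinite-dimensional topology and dimension theory, namely that neighborhoods in the Hilbert cube contain copies of the Hilbert cube and that topological dimension is monotone under passage to subspaces of metrizable spaces; neither requires new work here.
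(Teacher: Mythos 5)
Your proposal is correct and follows essentially the same route as the paper, which deduces the corollary by applying Theorem \ref{thm:ptreeemb} with $H=[0,1]^{\aleph_{0}}$ (the paper leaves this deduction implicit, just as for Corollary \ref{cor:treecor}). Your only variation --- using $H=[0,1]$ for path-connectedness rather than extracting paths from an embedded Hilbert cube --- is immaterial, and the supporting facts you invoke (small closed subproducts of the Hilbert cube are Hilbert cubes; subspace monotonicity of covering dimension in metrizable spaces) are exactly the standard ingredients needed to make the deduction rigorous.
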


\bibliographystyle{amsplain}
\bibliography{bibtex/tree.bib}

\end{document}